\newtheorem{theorem}{Theorem}
\newtheorem{definition}{Definition}
\newtheorem{lemma}[theorem]{Lemma}
\newtheorem{corollary}[theorem]{Corollary}
\newtheorem{conjecture}{Conjecture}
\newtheorem{problem}{Problem}
\newtheorem{claim}{Claim}
\newcommand{\mc}[1]{\mathcal{#1}}
\newcommand{\vv}{\textup{\textsf{v}}}
\newcommand{\e}{\textup{\textsf{e}}}
\author
{
Olivier Fischer 
\and
Raphael Steiner 
}
\thanks{Department of Computer Science, Institute of Theoretical Computer Science, ETH Z\"{u}rich, Switzerland,  \texttt{olivier.fischer@inf.ethz.ch, raphaelmario.steiner@inf.ethz.ch}.
The second author was supported by an ETH Postdoctoral Fellowship.}
\date{\today}
\title{On the choosability of $H$-minor-free graphs}
\begin{document}
\maketitle

\begin{abstract}
Given a graph $H$, let us denote by $f_\chi(H)$ and $f_\ell(H)$, respectively, the maximum chromatic number and the maximum list chromatic number of $H$-minor-free graphs. Hadwiger's famous coloring conjecture from 1943 states that $f_\chi(K_t)=t-1$ for every $t \ge 2$. A closely related problem that has received significant attention in the past concerns $f_\ell(K_t)$, for which it is known that $2t-o(t) \le f_\ell(K_t) \le O(t (\log \log t)^6)$. Thus, $f_\ell(K_t)$ is bounded away from the conjectured value $t-1$ for $f_\chi(K_t)$ by at least a constant factor. The so-called $H$-Hadwiger's conjecture, proposed by Seymour, asks to prove that $f_\chi(H)=\vv(H)-1$ for a given graph $H$ (which would be implied by Hadwiger's conjecture). 

In this paper, we prove several new lower bounds on $f_\ell(H)$, thus exploring the limits of a list coloring extension of $H$-Hadwiger's conjecture. Our main results are:

\begin{itemize}
    \item For every $\varepsilon>0$ and all sufficiently large graphs $H$ we have $f_\ell(H)\ge (1-\varepsilon)(\vv(H)+\kappa(H))$, where $\kappa(H)$ denotes the vertex-connectivity of $H$. 
    \item For every $\varepsilon>0$ there exists $C=C(\varepsilon)>0$ such that asymptotically almost every $n$-vertex graph $H$ with $\left\lceil C n\log n\right\rceil$ edges satisfies $f_\ell(H)\ge (2-\varepsilon)n$. 
\end{itemize}
The first result generalizes recent results on complete and complete bipartite graphs and shows that the list chromatic number of $H$-minor-free graphs is separated from the natural lower bound $(\vv(H)-1)$ by a constant factor for all large graphs $H$ of linear connectivity. The second result tells us that even when $H$ is a very sparse graph (with an average degree just logarithmic in its order), $f_\ell(H)$ can still be separated from $(\vv(H)-1)$ by a constant factor arbitrarily close to $2$.

Conceptually these results indicate that the graphs $H$ for which $f_\ell(H)$ is close to the conjectured value $(\vv(H)-1)$ for $f_\chi(H)$ are typically rather sparse.
\end{abstract}

\section{Introduction}
All graphs considered in this paper are finite, have no loops and no parallel edges. Given graphs $G$ and $H$, we say that $G$ contains $H$ as a \emph{minor}, in symbols, $G\succeq H$, if a graph isomorphic to $H$ can be obtained from a subgraph of $G$ by contracting edges.

Hadwiger's coloring conjecture, first stated in 1943 by Hugo Hadwiger~\cite{hadwiger}, is among the most famous and important open problems in graph theory. It claims a deep relationship between the chromatic number of graphs and their containment of graph minors, as follows.
\begin{conjecture}[Hadwiger~\cite{hadwiger}]
Let $t \in \mathbb{N}$. If a graph $G$ is $K_t$-minor-free, then $\chi(G)\le t-1$. 
\end{conjecture}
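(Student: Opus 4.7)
I must admit at the outset that this is Hadwiger's 1943 conjecture itself, open for all $t \ge 7$, so a full proof lies well beyond any realistic sketch; what follows is an attack template rather than a plausible proof. The plan is to treat a minimum counterexample structurally for small $t$ and fall back on extremal density arguments for general $t$.

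For small $t$ one proceeds by induction on $|V(G)|$ on a minimum counterexample $G$. Such a $G$ must satisfy $\delta(G) \ge t-1$, and it must be $(t-1)$-connected: a separator $S$ with $|S| \le t-2$ would allow us to color the two pieces by induction and then permute color classes to agree on $S$. For $t \le 4$ this suffices to force a $K_t$-minor by a direct case analysis on the neighborhood structure of a minimum-degree vertex. For $t \in \{5,6\}$ the same framework, combined with the Four Color Theorem, underlies Wagner's reduction and the Robertson--Seymour--Thomas proof; in both cases one shows that $G$ is essentially planar after removing a small apex set. My first step would be to revisit this contraction-critical/apex framework, since it seems to be the only structural handle currently available.

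For general $t$ the natural route is extremal: by the theorems of Kostochka and Thomason, every $K_t$-minor-free graph has average degree $O(t\sqrt{\log t})$, hence is $O(t\sqrt{\log t})$-degenerate and $O(t\sqrt{\log t})$-colorable, with recent improvements shaving further logarithmic factors. To push this bound toward $t-1$, I would take a contraction-critical counterexample, use criticality to boost local connectivity inside dense subgraphs, and try to combine a highly connected core with apex-like boundary vertices in order to build $K_t$ as a minor directly, rather than through a degeneracy argument.

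The principal obstacle is the factor $\sqrt{\log t}$ inherent in any purely density-based argument: Thomason's random construction exhibits $K_t$-minor-free graphs of average degree $\Theta(t\sqrt{\log t})$, so any proof of $\chi \le t-1$ must exploit coloring features that genuinely go beyond degeneracy. Identifying such a feature specific to $K_t$-minor-freeness is exactly where my plan would stall, and indeed it is this step that has resisted attack for over eighty years; realistically, the most one can hope for from the outlined approach is either a further shaving of logarithmic factors in the asymptotic bound or, conditional on refined structure theorems, a resolution of the next small case $t=7$.
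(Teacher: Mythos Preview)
The statement you were asked to prove is Hadwiger's conjecture, which the paper states as a \emph{conjecture}, not a theorem; accordingly the paper offers no proof of it, and there is nothing to compare your attempt against. You recognize this yourself in your first sentence, and your ``attack template'' is an honest survey of the known partial results (the small cases $t\le 6$, the Kostochka--Thomason degeneracy bound, the $\sqrt{\log t}$ obstruction) rather than a proof. That is the correct assessment of the situation: the conjecture is open for $t\ge 7$, and no argument along the lines you sketch is known to close the gap.
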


Hadwiger's conjecture has been proved for all values $t \le 6$, see~\cite{robertson} for the most recent result in this sequence, resolving the $K_6$-minor-free case. For $t=5$, the conjecture states that $K_5$-minor-free graphs are $4$-colorable. Since planar graphs are $K_5$-minor-free, this special case already generalizes the famous four color theorem that was proved in 1976 by Appel, Haken and Koch~\cite{appelhaken1,appelhaken2}. 

Given that during 80 years of study little progress has been made towards resolving Hadwiger's conjecture for $t \ge 7$, it seems natural to approach the conjecture via meaningful relaxations. For instance, much of recent work has focused on its asymptotic version. The so-called \emph{linear Hadwiger conjecture} states that for some absolute constant $C\ge 1$, every $K_t$-minor-free graph is $\lfloor Ct \rfloor$-colorable. Starting with a breakthrough result by Norin, Postle and Song~\cite{norine} in 2019, there has been a set of papers providing some exciting progress towards this conjecture~\cite{norine2,postle,postle2,postle3}. This culminated in the currently best known upper bound of $O(t \log\log t)$ for the chromatic number of $K_t$-minor-free graphs by Delcourt and Postle~\cite{del} in 2021.

Another natural relaxation, proposed by Seymour~\cite{survey,seymour}, suggests replacing the condition that the considered graphs exclude $K_t$ as a minor by the weaker condition that they exclude a specified, possibly non-complete graph $H$ on $t$ vertices as a minor. \begin{conjecture}[$H$-Hadwiger conjecture~\cite{survey,seymour}] $H$-minor-free graphs are $(\vv(H)-1)$-colorable.   
\end{conjecture}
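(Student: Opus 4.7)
The statement is Seymour's $H$-Hadwiger conjecture, which is a strict weakening of Hadwiger's original conjecture: since $K_{\vv(H)}\succeq H$, every $H$-minor-free graph is automatically $K_{\vv(H)}$-minor-free, so the conjecture is implied by Hadwiger's and, conversely, can be no easier. A complete proof is therefore out of reach, and my plan can only outline an approach that would succeed for restricted families of $H$ while flagging where it breaks down in general.

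The plan is to argue by contradiction via a minimum counterexample. Let $G$ be a $\vv(H)$-chromatic, $H$-minor-free graph of minimum order; by standard Dirac-type criticality arguments $G$ is $(\vv(H)-1)$-connected with $\delta(G)\ge \vv(H)-1$, and the goal becomes to exhibit $H$ as a minor of $G$. Mader--Thomason--Kostochka extremal-function bounds ensure $G$ is sufficiently dense, and one would then invoke an appropriate form of the Robertson--Seymour structure theorem for $H$-minor-free graphs: $G$ decomposes as a clique-sum of pieces almost embeddable in surfaces of bounded genus, together with a bounded number of apex vertices and vortices of bounded depth. Colouring each piece via bounded-genus colouring theory and patching across clique-sums (which do not increase the chromatic number beyond the shared clique size) produces some finite upper bound on $\chi(G)$; the remaining task is to tune the constants so that this bound equals $\vv(H)-1$. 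For sufficiently sparse $H$ one can bypass structure theory entirely: if the extremal function satisfies $\mathrm{ex}(n,H\text{-minor})\le(\vv(H)-1)n$, a greedy degeneracy argument on a minimum-degree vertex closes the proof by induction.

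The principal obstacle is the quantitative gap between the chromatic bound one extracts from structural decompositions — typically of the form $C\cdot\vv(H)$ with an uncontrolled constant $C$, or $O(\vv(H)\sqrt{\log\vv(H)})$ from extremal arguments alone — and the conjectured value $\vv(H)-1$. Closing this gap for arbitrary $H$ would, by specialising to $H=K_{\vv(H)}$, settle Hadwiger's conjecture itself, so any approach that works uniformly across all $H$ must contain a genuinely new idea beyond the structure-theoretic and extremal toolkit currently available. A reasonable intermediate milestone, and one which the lower bounds in the present paper implicitly highlight as the natural boundary, is to prove the conjecture for restricted classes such as bipartite $H$, $H$ with a bounded number of apex vertices, or $H$ whose exclusion forces average degree at most $\vv(H)-1$ — here the list-colouring lower bounds of this paper cannot interfere, since the gap between $\chi$ and $\chi_\ell$ is itself the object of study.
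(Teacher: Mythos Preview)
The statement you were asked to address is a \emph{conjecture}, and the paper does not prove it. It is stated as an open problem attributed to Seymour, with the remark that it would follow from Hadwiger's conjecture; the paper's actual contributions concern lower bounds on $f_\ell(H)$, not upper bounds on $f_\chi(H)$. There is therefore no proof in the paper against which to compare your proposal.

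You recognise this correctly and say so up front, which is the right call. Your sketch of a minimum-counterexample plus structure-theorem approach is a fair summary of the standard toolkit, and your identification of the quantitative gap (constants of order $C\cdot\vv(H)$ or $O(\vv(H)\sqrt{\log\vv(H)})$ versus the target $\vv(H)-1$) as the essential obstruction is accurate. One small correction: you write that the $H$-Hadwiger conjecture ``can be no easier'' than Hadwiger's; in fact for any fixed non-complete $H$ it is strictly weaker, since it is the specialisation $H=K_t$ that recovers the full conjecture. The point you presumably intend---that a \emph{uniform} proof valid for all $H$ would in particular settle Hadwiger's conjecture---is correct and you state it later.

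In short: there is no gap to name because there is no claimed proof, and no comparison to make because the paper offers none. Your discussion is appropriate given that the target is an open conjecture.
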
 
Note that Hadwiger's conjecture would imply the truth of this statement for every $H$. Also, note that this upper bound on the chromatic number would be best possible for every $H$, as the complete graph $K_{\vv(H)-1}$ has chromatic number $(\vv(H)-1)$ but is too small to host an $H$-minor. 

The $H$-Hadwiger conjecture can easily be verified using a degeneracy-coloring approach if $H$ is a forest, and it is also known to be true for spanning subgraphs of the Petersen graph~\cite{hendrey}. A particular case of the $H$-Hadwiger conjecture which has received special attention in the past is when $H=K_{s,t}$ is a complete bipartite graph. Woodall~\cite{woodall} conjectured in 2001 that every $K_{s,t}$-minor-free graph is $(s+t-1)$-colorable. Also this problem remains open, but if true it would resolve the $H$-Hadwiger conjecture for all bipartite $H$. Several special cases of this conjecture have been solved by now. Most notably, Kostochka~\cite{kostochkabip1,kostochkabip2} proved that for some function $t_0(s)=O(s^3\log^3 s)$, the $H$-Hadwiger conjecture holds whenever $H=K_{s,t}$ and $t \ge t_0(s)$. The conjecture is also true for $H=K_{3,3}$, which can be seen using the structure theorem for $K_{3,3}$-minor-free graphs by Wagner~\cite{wagner} and the fact that planar graphs are $5$-colorable. In addition, the statement has been proved for $H=K_{2,t}$ when $t \ge 1$~\cite{chudnovsky,myers,woodall,woodall2}, for $H=K_{3,t}$ when $t \ge 6300$~\cite{kostochkabip3} and for $H=K_{3,4}$~\cite{jorgensen}. In a different direction, Norin and Turcotte~\cite{norine4} recently proved the $H$-Hadwiger conjecture for all sufficiently large bipartite graphs of bounded maximum degree that belong to a class of graphs with strongly sublinear separators. 

\subsection*{List coloring $H$-minor-free graphs} In this paper, we shall be concerned with the \emph{list chromatic number} of graphs that exclude a fixed graph $H$ as a minor. List coloring is a well-known and popular subject in the area of graph coloring, whose introduction dates back to the seminal paper of Erd\H{o}s, Rubin and Taylor~\cite{erdos}. A \emph{list assignment} for a graph $G$ is a mapping $L:V(G)\rightarrow 2^\mathbb{N}$ assigning to every vertex $v \in V(G)$ a finite set $L(v)$ of colors, also called the \emph{list} of $v$. An \emph{$L$-coloring} of $G$ is a proper coloring $c:V(G)\rightarrow \mathbb{N}$ for which every vertex must choose a color from its list, that is, $c(v) \in L(v)$ for every $v \in V(G)$. Finally, we say that $G$ is \emph{$k$-choosable} for some integer $k \ge 1$ if there exists a proper $L$-coloring for every list-assignment $L$ satisfying $|L(v)|\ge k$ for all $v \in V(G)$. The \emph{list chromatic number} of $G$, denoted $\chi_\ell(G)$, is the smallest integer $k$ such that $G$ is $k$-choosable. Note that trivially $\chi(G)\le \chi_\ell(G)$ for every graph $G$, but conversely no relationship holds, as $\chi_\ell(G)$ is unbounded even on bipartite graphs $G$, see~\cite{erdos}. 

The first open problem regarding list coloring of minor-closed graph classes was raised already in the seminal paper~\cite{erdos} by Erd\H{o}s et al. in 1979, who asked to determine the maximum list chromatic number of planar graphs. This question was answered in the 1990s in work of Thomassen~\cite{thomassen} and Voigt~\cite{voigt}. Thomassen proved that every planar graph is $5$-choosable, and Voigt gave the first examples of planar graphs $G$ with list chromatic number $\chi_\ell(G)=5$. 

The latter result also answered a question by Borowiecki~\cite{borowiecki} in the negative, who had asked whether one could potentially strengthen Hadwiger's conjecture to the list coloring setting by asserting that every $K_t$-minor-free graph $G$ satisfies $\chi_\ell(G)\le t-1$. 

Given the previous discussion, it is natural to study the maximum list chromatic number of $K_t$-minor-free graphs, see also~\cite{op} for an open problem garden entry about this problem. To make the following presentation more convenient, for every graph $H$ we denote by $f_\chi(H)$ and $f_\ell(H)$, respectively, the maximum (list) chromatic number of $H$-minor-free graphs. Note that with this notation, the $H$-Hadwiger conjecture amounts to saying that $f_\chi(H)=\vv(H)-1$.

Let us briefly summarize previous work regarding bounds on $f_\ell(K_t)$. The construction of Voigt mentioned above shows that $f_\ell(K_5)\ge 5$. Thomassen's result regarding the $5$-choosability of planar graphs was later extended by \v{S}krekovski~\cite{skrekovski} to $K_5$-minor-free graphs, thus proving that $f_\ell(K_5)=5$. Until today none of the values $f_\ell(K_t)$ with $t \ge 6$ have been determined precisely, a list of the currently best known lower and upper bounds for $f_\ell(K_t)$ for small values of $t$ can be found in~\cite{barat}. In 2007, Kawarabayashi and Mohar~\cite{kawarabayashimohar} made two conjectures regarding the asymptotic behaviour of $f_\ell(K_t)$, namely that (A) $f_\ell(K_t)=O(t)$, this is known as the \emph{list linear Hadwiger conjecture}, and that (B) $f_\ell(K_t)\le \frac{3}{2}t$ for every $t$. In 2010, Wood~\cite{wood}, inspired by the fact that $f_\ell(K_5)=5$, proposed an even stronger conjecture stating that $f_\ell(K_t)=t$ for every $t \ge 5$. This strong conjecture was refuted in 2011 by Bar\'{a}t, Joret and Wood, who gave a construction showing that $f_\ell(K_t) \ge \frac{4}{3}t-O(1)$. However, the weaker conjecture (B) by Kawarabayashi and Mohar still remained open. Recently, a new lower bound of $f_\ell(K_t) \ge 2t-o(t)$ was established by the second author~\cite{steiner}, thus refuting conjecture~(B). As for upper bounds, the best currently known bound is $f_\ell(K_t) \le Ct (\log \log t)^6$, which was established in 2020 by Postle~\cite{postle3}. 
Some previous work also addressed bounds on $f_\ell(H)$ when $H$ is non-complete. In particular, Woodall~\cite{survey} conjectured in 2001 that $f_\ell(K_{s,t})=s+t-1$ for all integers $s, t \ge 1$, and proved this in the case when $s=t=3$. From the previously mentioned works~\cite{chudnovsky,myers,woodall,woodall2} it was also known that $f_\ell(K_{2,t})=t+1$ for $t \ge 1$. Additionally, a result by J\o{}rgensen~\cite{jorgensen} implied the truth of the conjecture for $K_{3,4}$, and Kawarabayashi~\cite{kawarabayashi} proved that $f_\ell(K_{4,t})\le 4t$ for every~$t$.  Despite this positive evidence, Woodall's conjecture was recently disproved by the second author~\cite{steiner2} showing that $f_\ell(K_{s,t})\ge (1-o(1))( 2s+t)$ for all large values of $s \le t$. 
A positive result comes from the aforementioned result of Norin and Turcotte~\cite{norine4}, which also works for list colorings and shows that $f_\ell(H)=\vv(H)-1$ for all large bipartite graphs $H$ of bounded maximum degree in a graph class with strongly sublinear separators.

\subsection*{Our contribution} The above discussion shows that when excluding a sufficiently large complete or a sufficiently large balanced complete bipartite graph $H$, the value of $f_\ell(H)$ exceeds the trivial lower bound $f_\ell(H) \ge \vv(H)-1$ by at least a constant factor. This means that, in a strong sense, one cannot hope for extending Hadwiger's conjecture to list coloring with the same quantitative bounds. However, note that if $H$ is a complete or a balanced complete bipartite graph, then $H$ is quite dense in the sense that it has a quadratic number of edges. On the other extreme of the spectrum, the previously mentioned result by Norin and Turcotte~\cite{norine4} shows that $f_\ell(H)=\vv(H)-1$ \emph{does} hold for large classes of graphs $H$ with a constant maximum degree (and thus, with a linear number of edges). This naturally opens up a new question, as follows: How sparse must the desired minor $H$ be, such that one \emph{can} hope for a list coloring extension of the $H$-Hadwiger conjecture? Concretely, which structural and density properties of graphs $H$ guarantee that $f_\ell(H)=\vv(H)-1$? 
While one might be tempted to hope for a nice description of the class of all graphs $H$ satisfying $f_\ell(H)=\vv(H)-1$, Theorem~\ref{prop:isol} below speaks a word of caution: Any given graph $F$ can be augmented, by the addition of sufficiently many isolated vertices, to a graph $H$ in this class. 

\begin{theorem}\label{prop:isol}
For every graph $F$ there exists $k_0=k_0(F)$ such that for every $k \ge k_0$ the graph $H$ obtained from $F$ by the addition of $k$ isolated vertices satisfies $f_\ell(H)=\vv(H)-1$. In fact, every $H$-minor-free graph is $(\vv(H)-2)$-degenerate.
\end{theorem}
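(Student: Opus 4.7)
The plan is to combine Mader's extremal minor bound with a hitting-set characterization of $H$-minor-freeness, supplemented by a finite case analysis; the degeneracy statement immediately yields the list-chromatic bound by greedy coloring, so I focus on the degeneracy. Set $n = \vv(F)$. By Mader's theorem there is a constant $c = c(F)$ such that every $F$-minor-free graph $G_0$ satisfies $e(G_0) \le c \cdot \vv(G_0)$ and is hence $(2c-1)$-degenerate. I choose $k_0(F)$ to be at least $2c(F) - n + 2$, so that $n+k-2 \ge 2c - 1$ for all $k \ge k_0$; this already establishes $(n+k-2)$-degeneracy for any $F$-minor-free graph, and the new content is the same bound for $H$-minor-free graphs $G$ with $G \succeq F$.

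The crucial reformulation is the following: $G \succeq F+kK_1$ if and only if there exists $T \subseteq V(G)$ with $|T|=k$ such that $G - T \succeq F$. Equivalently, $G$ is $H$-minor-free if and only if $G - T$ is $F$-minor-free for every $k$-subset $T \subseteq V(G)$. To prove the degeneracy bound, I suppose for contradiction that some subgraph $G' \subseteq G$ has $\delta(G') \ge n+k-1$, whence $\vv(G') \ge n+k$. By the above characterization, $G' - T$ is $F$-minor-free for every $k$-subset $T \subseteq V(G')$, so $e(G'-T) \le c(\vv(G')-k)$. A double-counting argument, summing this inequality over all $k$-subsets $T$ and using the identity $\sum_T e(G'-T) = e(G') \binom{\vv(G')-2}{k}$, then yields
$$e(G') \le \frac{c \cdot \vv(G')(\vv(G')-1)}{\vv(G')-k-1},$$
which, combined with $e(G') \ge \vv(G')(n+k-1)/2$ from the minimum-degree hypothesis, places $\vv(G')$ in a bounded interval $[n+k, B(F,k)]$ for an explicit $B(F,k)$; the choice $k \ge k_0$ is exactly what makes the denominator $n+k-1-2c$ positive, so that the bound is meaningful.

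The main obstacle is that $B(F,k)$ is not in general strictly less than $n+k$, so a finite case analysis is required in the bounded range $\vv(G') \in [n+k, B(F,k)]$. In this range the graph $G'$ is very close to complete: the two inequalities pin its density tightly, and the gap between $\vv(G')$ and the minimum-degree bound $n+k-1$ is controlled by $F$-dependent constants. I would verify directly, exploiting this near-completeness, that such a $G'$ admits an $F$-minor in a subset of $\vv(G')-k$ vertices, yielding $G' \succeq H$ and contradicting the $H$-minor-freeness of $G$. Choosing $k_0(F)$ sufficiently large (as a function of $c(F)$ and $n$) ensures the case analysis terminates uniformly. Finally, $(\vv(H)-2)$-degeneracy implies $\chi_\ell \le \vv(H)-1$ by the standard greedy list-coloring argument, matching the trivial lower bound witnessed by $K_{\vv(H)-1}$ to give $f_\ell(H) = \vv(H)-1$.
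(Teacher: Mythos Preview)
Your approach is sound and genuinely different from the paper's. Both arguments reduce to showing that any graph $G'$ with $\delta(G')\ge n+k-1$ (where $n=\vv(F)$) must contain $H$ as a minor. The paper does this by taking a vertex-minimal $F$-minor on a set $X\subseteq V(G')$, proving a delicate structural claim that every vertex $v\notin X$ has fewer than $9\vv(F)$ neighbours in each branch set (hence fewer than $9\vv(F)^2$ neighbours in $X$), and then noting that $G'-X$ has fewer than $k$ vertices but minimum degree exceeding $\bigl(1-\tfrac{1}{\vv(F)-1}\bigr)\vv(G'-X)$, so Tur\'an produces a copy of $K_{\vv(F)}\supseteq F$ there. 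Your averaging over all $k$-subsets $T$ is more direct and bypasses that structural claim entirely: it pins $m:=\vv(G')$ to within an additive constant (depending only on $F$) of $n+k$, so that the complement $\overline{G'}$ has maximum degree bounded by that same constant.

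The one real gap is your final step, which you defer to ``a finite case analysis'' and ``verifying directly''. That phrasing is misleading: there is nothing finite to enumerate, since $m\ge n+k$ grows with $k$. What you need is a uniform argument, and it turns out to be essentially the same Tur\'an step the paper uses. From your inequalities one computes $B(F,k)-(n+k)=\frac{(n+k-1)(2c-n+1)}{n+k-1-2c}$, which for $k$ large enough is at most $C:=2c-n+2$; hence $\overline{G'}$ has maximum degree at most $m-(n+k)\le C$. A greedy independent set in $\overline{G'}$ then gives a clique in $G'$ of size at least $m/(C+1)\ge(n+k)/(C+1)\ge n$ once $k\ge nC$. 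This $K_n$ contains $F$ as a subgraph on $n\le m-k$ vertices, whence $G'\succeq H$. With this line filled in your proof is complete and arguably cleaner than the paper's, at the cost of a somewhat larger implicit $k_0$.
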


This shows that arbitrary graphs $F$ can show up as subgraphs\footnote{and in fact, as induced subgraphs} of graphs $H$ with $f_\ell(H)=\vv(H)-1$. To avoid such artifical constructions and to make a nice structural description of the graph class at hand more likely, it seems natural to ask for the largest class that is \emph{closed under taking subgraphs} such that all members $H$ of this class satisfy $f_\ell(H)=\vv(H)-1$. \footnote{This is done in the spirit of the definition of perfect graphs, where a nice characterization of graphs with $\chi(G)=\omega(G)$ seems elusive, but the largest class of graphs with this property that is closed under taking induced subgraphs admits a beautiful structural description by the strong perfect graph theorem.}

\begin{problem}\label{prob:perf}
Characterize the class $\mathcal{H}$ of graphs $H$ such that $f_\ell(H')=\vv(H')-1$ for all $H'\subseteq H$. 
\end{problem}

The main contributions of this paper are Theorems~\ref{thm:conn} and ~\ref{thm:random} below, which establish new lower bounds on $f_\ell(H)$ and strongly limit the horizon for positive instances of Problem~\ref{prob:perf}. The first result proves a lower bound on $f_\ell(H)$ in terms of $\vv(H)$ and the vertex-connectivity $\kappa(H)$, implying that $f_\ell(H)$ exceeds $\vv(H)$ by a constant factor for all large graphs of linear connectivity\footnote{Here, by graphs of linear connectivity we mean $n$-vertex graphs $H$ that are $\alpha n$-connected for some small but absolute constant $\alpha>0$.}.

\begin{theorem}\label{thm:conn}
For every $\varepsilon>0$ there exists $n_0=n_0(\varepsilon)\in \mathbb{N}$ such that every graph $H$ on at least $n_0$ vertices satisfies $f_\ell(H)\ge (1-\varepsilon)(\vv(H)+\kappa(H))$. 
\end{theorem}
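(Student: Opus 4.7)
The plan is to generalize the constructions from~\cite{steiner} (for $H=K_t$) and~\cite{steiner2} (for $H=K_{s,t}$), in both of which the extremal graph takes the form $G = K_a + B$ for appropriately chosen $a$ and a bipartite graph $B$. Write $n:=\vv(H)$ and $k:=\kappa(H)$. If $k<\varepsilon n/(1-\varepsilon)$, then $(1-\varepsilon)(n+k)\le n-1$ and the claim reduces to the trivial bound $f_\ell(H)\ge\chi_\ell(K_{n-1})=n-1$. Hence I may assume $H$ has linear connectivity, $k\ge c n$ for some $c=c(\varepsilon)>0$.

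I would construct $G = K_a + B$ with $B$ bipartite. The identity $\chi_\ell(G_1+G_2)=\chi_\ell(G_1)+\chi_\ell(G_2)$, obtained by giving the two sides disjoint color palettes so that every $L$-coloring splits into proper $L$-colorings of $G_1$ and $G_2$, yields $\chi_\ell(G) = a + \chi_\ell(B)$. A standard branch-set analysis establishes the following minor-free criterion: $G = K_a+B$ contains $H$ as a minor if and only if there exists $I\subseteq V(H)$ with $|I|\ge n-a$ such that $H[I]$ is a minor of $B$ (sufficiency by taking singleton $K_a$-branch sets for $V(H)\setminus I$, which automatically realize all edges into $I$ via the join). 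Setting $a := n-k-1$, the condition becomes that $B$ must avoid $H[I]$ as a minor for every $I\subseteq V(H)$ with $|I|\ge k+1$.

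To control the family of forbidden minors, I would exploit the $k$-connectivity of $H$, which yields minimum degree at least $k$. The expectation is that every induced subgraph $H[I]$ with $|I|$ not too far from $n$ contains a common large clique minor $K_r$ of order $r = \Omega(k/\sqrt{\log k})$, for instance via a Kostochka--Thomason-type density argument applied to $H[I]$. One may then take $B$ to be a bipartite graph that is $K_r$-minor-free with $\chi_\ell(B)\ge(1-\varepsilon')(2r)$, which is provided by the Steiner construction from~\cite{steiner}. By transitivity of the minor relation, avoiding $K_r$ as a minor of $B$ automatically rules out $H[I]$ as a minor of $B$. After tuning the parameters $a$ and $r$, this should deliver the desired bound $\chi_\ell(G) \ge (1-\varepsilon)(n+k)$.

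The main obstacle I expect is handling the subsets $I\subseteq V(H)$ of size close to the lower threshold $k+1$: here $H[I]$ can have minimum degree close to zero, or even be empty if $H$ has an independent set of size at least $k+1$, so the clique-minor argument breaks down completely. To overcome this I would either perform a case analysis on the independence number $\alpha(H)$, treating graphs $H$ with large independent sets separately through a modified construction, or strengthen the gadget $B$ by imposing an additional structural property—such as a bound on tree-depth or a high-girth condition—that already prevents disconnected or almost-edgeless $H[I]$ from being realized as minors in the relevant manner. Executing this refinement while preserving the high list-chromatic-number lower bound on $B$ is the central technical challenge of the proof.
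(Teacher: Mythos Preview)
Your proposal has a genuine gap, and it is exactly the obstacle you flag at the end. With $a=n-k-1$ the bipartite graph $B$ must avoid \emph{every} $H[I]$ with $|I|\ge k+1$ as a minor; but whenever $\alpha(H)\ge k+1$ (already the case for $H=K_{s,t}$, $s\le t$, one of your motivating examples), some such $H[I]$ is edgeless on $k+1$ vertices, and since any graph on at least $k+1$ vertices trivially contains $\overline{K_{k+1}}$ as a minor, this forces $\vv(B)\le k$ and hence $\chi_\ell(K_a+B)\le a+k=n-1$. None of your suggested patches (tree-depth bounds, high girth, casework on $\alpha(H)$) escape this, because the obstruction is a pure vertex count, not an edge-structural one. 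A second, independent issue is quantitative: even restricting to $|I|$ close to $n$, the Kostochka--Thomason route only produces a $K_r$-minor in $H[I]$ with $r=\Theta(k/\sqrt{\log k})$, so a $K_r$-minor-free $B$ has $\chi_\ell(B)\le f_\ell(K_r)=o(k)$, and the total is at most $(n-k-1)+o(k)<n$, nowhere near $(1-\varepsilon)(n+k)$. Finally, the identity $\chi_\ell(G_1+G_2)=\chi_\ell(G_1)+\chi_\ell(G_2)$ does not follow from your disjoint-palette argument: that argument produces lists of size $\chi_\ell(G_1)-1$ on one side and $\chi_\ell(G_2)-1$ on the other, so it only witnesses $\chi_\ell(G_1+G_2)\ge\min(\chi_\ell(G_1),\chi_\ell(G_2))$.

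The paper avoids all of this by abandoning both the join and the clique-minor proxy. It builds a single graph $F$ whose vertex set is partitioned into two cliques $A,B$ with $|A|\approx(1-2\tilde\varepsilon)k$, $|B|\approx(1-2\tilde\varepsilon)n$, and with each vertex of $B$ missing at most $\tilde\varepsilon n$ cross-edges; $F$ arises as the complement of a sparse random bipartite graph, and one proves \emph{directly} that $F$ is $H$-minor-free (Lemma~\ref{lemma:deterministiccorollary}), with no detour through $K_r$. Because $|A|<\kappa(H)$, pasting many copies of $F$ along the clique $A$ preserves $H$-minor-freeness (Lemma~\ref{lemma:glue}). The list-chromatic lower bound then comes not from any additivity under join but from an explicit list assignment on the pasted graph (Lemma~\ref{lemma:cliqdegetc}), giving $\chi_\ell\ge|A|+|B|-\tilde\varepsilon n\approx n+k$. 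Note also that the constructions in~\cite{steiner,steiner2} already follow this clique-sum template rather than the join $K_a+B$ you describe.
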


In particular, this result immediately generalizes both of the lower bounds of $f_\ell(K_t)\ge 2t-o(t)$ and $f_\ell(K_{s,t})\ge (1-o(1))(2s+t)$ previously established by the second author in~\cite{steiner,steiner2} by noting that $\kappa(K_t)=t-1$ and $\kappa(K_{s,t})=s$ for $s \le t$. It also has the following simple consequence, showing that the graphs in $\mathcal{H}$ have a subquadratic number of edges.

\begin{corollary}\label{cor:edges}
For every $n \in \mathbb{N}$, let $h(n)$ denote the maximum possible number of edges of an $n$-vertex graph in $\mathcal{H}$. Then $\lim_{n\rightarrow \infty}\frac{h(n)}{n^2}=0$. 
\end{corollary}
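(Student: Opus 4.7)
The plan is to derive the corollary from Theorem~\ref{thm:conn} by combining it with a classical result of Mader on the existence of highly-connected subgraphs in graphs of large average degree. The key observation is that membership of $H$ in $\mathcal{H}$ forces an upper bound $f_\ell(H')\le\vv(H')-1$ on every subgraph $H'\subseteq H$, and Theorem~\ref{thm:conn} turns this into a strong constraint on the connectivity of every large subgraph, which is incompatible with $H$ having a dense subgraph.

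I would proceed by contradiction. Suppose $h(n)/n^2\not\to 0$, so there is $c\in(0,1/2)$ and arbitrarily large $n$ admitting a graph $H\in\mathcal{H}$ on $n$ vertices with $|E(H)|\ge c n^2$. Fix $\varepsilon:=c/(4+c)/2$, and let $n_0=n_0(\varepsilon)$ be the threshold from Theorem~\ref{thm:conn}. By Mader's theorem (every graph with $|E(G)|\ge 4k|V(G)|$ has a $(k+1)$-connected subgraph), $H$ contains a subgraph $H'\subseteq H$ with $\kappa(H')\ge \lfloor cn/4\rfloor+1$. For $n$ large enough this forces $\vv(H')\ge \kappa(H')+1\ge n_0$, so Theorem~\ref{thm:conn} applies to $H'$ and yields
\begin{equation*}
 f_\ell(H')\ \ge\ (1-\varepsilon)\bigl(\vv(H')+\kappa(H')\bigr).
\end{equation*}

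On the other hand, since $H'\subseteq H\in\mathcal{H}$, the defining property of $\mathcal{H}$ gives $f_\ell(H')=\vv(H')-1$. Combining the two estimates and using $\kappa(H')\ge cn/4$ and $\vv(H')\le n$, we obtain
\begin{equation*}
 \vv(H')-1\ \ge\ (1-\varepsilon)\bigl(\vv(H')+\tfrac{cn}{4}\bigr),
\end{equation*}
which rearranges to $\varepsilon\, n\ge \varepsilon\,\vv(H')\ge (1-\varepsilon)\tfrac{cn}{4}+1-\varepsilon$, i.e.\ $\varepsilon\ge (1-\varepsilon)c/4$, contradicting the choice $\varepsilon<c/(4+c)$. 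This contradiction completes the proof.

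The argument is short and the only non-routine input is Mader's theorem; I expect no genuine obstacle, but the one point that has to be handled with care is making sure the subgraph $H'$ extracted by Mader's theorem is large enough to apply Theorem~\ref{thm:conn}. This is automatic since the connectivity of $H'$, and hence its order, grows linearly in $n$, so the threshold $n_0(\varepsilon)$ is cleared for all sufficiently large $n$.
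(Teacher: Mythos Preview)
Your proof is correct and follows essentially the same approach as the paper: assume a dense graph in $\mathcal{H}$ exists, extract a highly-connected subgraph via Mader's theorem, apply Theorem~\ref{thm:conn} to that subgraph, and derive a contradiction with the defining equality $f_\ell(H')=\vv(H')-1$. The only differences are cosmetic (the paper compares $\kappa(H')$ to $\vv(H')$ rather than to $n$, and your displayed rearrangement has a harmless slip---the constant should be $+1$ rather than $+1-\varepsilon$), and neither affects the argument.
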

\begin{proof}
Towards a contradiction, suppose the statement is not true. Then there is some $\delta \in (0,1)$ and an infinite strictly increasing sequence $(n_i)_{i=1}^\infty$ of natural numbers such that $\frac{h(n_i)}{n_i^2}\ge \delta$ for all $i \ge 1$. This means that for every $i$ there exists $H_i \in \mathcal{H}$ of order $n_i$ with $\e(H_i) \ge \delta n_i^2$ edges. By a classical result of Mader~\cite{mader} (see also~\cite{carmesin} for an optimal constant), every graph of average degree at least $4(k-1)$ for some integer $k \ge 2$ contains a $k$-connected subgraph. Consequently, for every $i$ there exists a subgraph $H_i'\subseteq H_i$ such that $\kappa(H_i')\ge\frac{1}{4}d_i$, where $d_i$ denotes the average degree of $H_i$. Since $\mathcal{H}$ is closed under taking subgraphs, we have $H_i' \in \mathcal{H}$. And as $d_i=\frac{2\e(H_i)}{\vv(H_i)} \ge \frac{2\delta n_i^2}{n_i}=2\delta n_i$, we conclude that $\kappa(H_i')\ge \frac{\delta}{2}{n_i}\ge \frac{\delta}{2}{\vv(H_i')}$ for every $i$. Furthermore, we have $\vv(H_i')> \kappa(H_i')\ge \frac{\delta}{2}{n_i} \rightarrow \infty$ for $i \rightarrow \infty$. The latter implies that there exists some $j \in \mathbb{N}$ such that $\vv(H_j')>n_0$, where $n_0=n_0(\varepsilon)$ with $\varepsilon:=\frac{\delta}{4}$ is the constant as given by Theorem~\ref{thm:conn}. The statement of Theorem~\ref{thm:conn} now implies that $f_\ell(H_j')\ge (1-\varepsilon)(\vv(H_j')+\kappa(H_j'))$. Consequently, $f_\ell(H_j') \ge (1-\frac{\delta}{4})(\vv(H_j')+\frac{\delta}{2}\vv(H_j'))=(1+\frac{\delta}{4}-\frac{\delta^2}{8})\vv(H_j')>\vv(H_j')$. Together with $f_\ell(H_j')=\vv(H_j')-1$ which follows by definition of $\mathcal{H}$, this yields the desired contradiction and concludes the proof.
\end{proof}

Our second result addresses to what extent sparsity of $H$ can push $f_\ell(H)$ closer to the trivial lower bound $\vv(H)-1$, by showing that for any fixed $\varepsilon>0$, asymptotically almost all $n$-vertex graphs $H$ with average degree of order $C\log n$ for a sufficiently large constant $C$ are far from being in $\mathcal{H}$, in the sense that $f_\ell(H)$ is separated from $\vv(H)-1$ by a factor of at least $2-\varepsilon$. 

\begin{theorem}\label{thm:random}
For every $\varepsilon>0$ there exists a constant $C=C(\varepsilon)>0$ such that asymptotically almost every graph $H$ on $n$ vertices with $\lceil C n\log n \rceil$ edges satisfies $f_\ell(H)\ge (2-\varepsilon) n$. 
\end{theorem}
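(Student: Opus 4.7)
The plan is to construct an explicit graph $G^{\star} = G^{\star}_n$ with $\chi_\ell(G^{\star}) \ge (2-\varepsilon)n$ and to show that, with high probability over a random $n$-vertex graph $H$ with $\lceil Cn\log n\rceil$ edges, we have $G^{\star} \not\succeq H$. A direct application of Theorem~\ref{thm:conn} to $H$ itself or to any of its minors is insufficient in this sparsity regime: for every minor $H' \preceq H$ on $k$ vertices one has $\kappa(H') \le 2\e(H)/k$, so
\[
\vv(H') + \kappa(H') \;\le\; k + \frac{2Cn\log n}{k} \;=\; O(\sqrt{n\log n}) \;=\; o(n),
\]
and hence Theorem~\ref{thm:conn} applied to any such $H'$ only yields $f_\ell(H) \ge f_\ell(H') = o(n)$, a factor of two (or more) too small.

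Following the strategy of the second author's constructions in~\cite{steiner,steiner2}, I would take $G^{\star} = K_{\varepsilon n} \vee G_0$, where $G_0$ is a $K_{(1-\varepsilon)n}$-minor-free graph with $\chi_\ell(G_0) \ge 2(1-\varepsilon)n - o(n)$; such a $G_0$ is precisely what the lower bound $f_\ell(K_{(1-\varepsilon)n}) \ge 2(1-\varepsilon)n - o(n)$ from~\cite{steiner} provides. Using the standard identity $\chi_\ell(K_a \vee F) = a + \chi_\ell(F)$, one then obtains $\chi_\ell(G^{\star}) \ge \varepsilon n + 2(1-\varepsilon)n - o(n) = (2-\varepsilon)n - o(n)$, while the Hadwiger number of $G^{\star}$ satisfies $\mathrm{had}(G^{\star}) = \varepsilon n + \mathrm{had}(G_0) \le \varepsilon n + (1-\varepsilon)n - 1 = n-1$, so $G^{\star}$ is in particular $K_n$-minor-free.

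The heart of the argument is a structural characterization of minor models of $H$ in $G^{\star}$. Analyzing branch sets carefully, one checks that $G^{\star} \succeq H$ if and only if there is a subset $V_A \subseteq V(H)$ with $|V_A| \le \varepsilon n$ such that $H[V(H)\setminus V_A]$ is a minor of $G_0$ (the vertices in $V_A$ correspond to branch sets using at least one vertex of the universal clique $K_{\varepsilon n}$). Combined with the specific iterated join / blow-up structure of the construction of $G_0$ in~\cite{steiner}, this imposes a strong combinatorial restriction on $H[V(H)\setminus V_A]$---for instance, the existence of a near-bipartite decomposition, or a bound on some minor-monotone density invariant inherited from $G_0$.

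The last step is then to show that this combinatorial restriction fails whp for random $H$ with $\lceil Cn\log n\rceil$ edges, provided $C = C(\varepsilon)$ is chosen large enough. This will follow from standard Chernoff-type concentration applied to the edge counts and the local structure of the $(1-\varepsilon)n$-vertex induced subgraphs $H[V(H)\setminus V_A]$, together with a union bound over the $\binom{n}{\varepsilon n}$ choices of $V_A$; the largeness of $C$ ensures that each individual probability is small enough to beat the union bound. The main obstacle is extracting from the specific construction of $G_0$ in~\cite{steiner} a restriction on its $(1-\varepsilon)n$-vertex minors that is stringent enough to be falsified whp, for pseudorandom induced subgraphs of $H$, by elementary concentration arguments.
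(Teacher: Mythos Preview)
Your high-level strategy---fix a single graph $G^{\star}$ with $\chi_\ell(G^{\star})\ge (2-\varepsilon)n$ and argue that it is $H$-minor-free for almost every $H$---is exactly the paper's. Your reduction ``$G^{\star}\succeq H$ iff $G_0\succeq H[U]$ for some $U$ with $|U|\ge (1-\varepsilon)n$'' is also correct. But the proposal stops precisely at the point where all the work lies: you explicitly flag the step ``extract from the construction of $G_0$ a restriction on its $(1-\varepsilon)n$-vertex minors that is violated whp by pseudorandom $H[U]$'' as the \emph{main obstacle} and leave it undone. That step is the entire content of the proof. Vague references to ``a near-bipartite decomposition'' or ``some minor-monotone density invariant'' do not yield a usable restriction; in particular, any minor of $G_0$ has at most $O(t\sqrt{\log t})\cdot k$ edges if it has $k$ vertices (by Kostochka--Thomason), but a random $H[U]$ has $\Theta(n\log n)$ edges, so the crude density test does not rule anything out.

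The paper resolves this obstacle not by quoting $G_0$ as a black box but by opening up its construction. It takes $G^\complement$ to be the complement of a random bipartite graph $G((1-3\delta)n,(1-3\delta)n;p)$, pastes $K$ copies along one of the two cliques, and proves two lemmas that together do the job. First, if $H$ has the pseudorandom property that any two disjoint vertex-sets of size $\ge\delta n$ span $\ge Dn\log n$ edges (which holds whp for $H$ with $\lceil Cn\log n\rceil$ edges), then $G^\complement$ contains no $H[U]$-minor for any $|U|\ge(1-\delta)n$; this is a branch-set counting argument using property~$\textsf{P}$ of the random bipartite graph. Second---and this is the genuinely new idea---the same pseudorandom property of $H$ replaces the connectivity hypothesis in the clique-sum lemma: if the $K$-fold pasting contains $H$ as a minor, then one copy already contains $H[U]$ for some $|U|\ge(1-\delta)n$, because the pseudorandomness forces almost all branch sets to live in a single copy. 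Combining the two gives $H$-minor-freeness of the pasting, and Lemma~\ref{lemma:cliqdegetc} gives $\chi_\ell\ge(2-\varepsilon)n$ directly.

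Note in particular that the extra layer $K_{\varepsilon n}\vee(\cdot)$ in your construction is unnecessary: once you open up $G_0$ you see that the bipartite-complement construction, sized correctly, already achieves $\chi_\ell\ge(2-\varepsilon)n$ on its own, and the analysis you would need to show $G_0\not\succeq H[U]$ is identical to the analysis the paper carries out for its (slightly larger) base graph.
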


This is conceptually stronger than what is guaranteed by Corollary~\ref{cor:edges}, as it indicates that graphs in $\mathcal{H}$ typically have as few as $O(n\log n)$ edges. It also shows that the lower bound $f_\ell(K_t)\ge 2t-o(t)$ for complete graphs from~\cite{steiner} applies in equal strength to almost all $t$-vertex graphs $H$ with $\omega(t \log t)$ edges, despite them being (much) sparser than $K_t$.

Our proofs of Theorem~\ref{thm:conn} and Theorem~\ref{thm:random} are based on several extensions and refinements of the probabilistic approach for lower-bounding $f_\ell(K_t)$ and $f_\ell(K_{s,t})$ introduced by the second author in~\cite{steiner,steiner2}. However, several new ideas are required to overcome obstacles arising from the largely increased generality of the setup. For instance, to prove Theorem~\ref{thm:random} one has to construct graphs avoiding rather sparse graphs $H$ as a minor. While the constructions in~\cite{steiner,steiner2} were based on the fact that clique sums of graphs under mild assumptions preserve $K_t$- and $K_{s,t}$-minor-freeness, a corresponding statement is no longer true for sparse graphs $H$ of much lower connectivity.

\subsection*{Organization of the paper} In Section~\ref{sec:probab} we prove two probabilistic results on random bipartite graphs that exhibit properties of these graphs that are crucial for our constructions in the proofs of Theorems~\ref{thm:conn} and~\ref{thm:random}. We then present the proofs of our main results Theorem~\ref{thm:conn} and Theorem~\ref{thm:random} in, respectively, Section~\ref{sec:thm1proof} and Section~\ref{sec:thm2proof}. Finally, in Section~\ref{sec:append} we separately prove Theorem~\ref{prop:isol}. The latter proof is self-contained and independent of the results in the other three sections.

\subsection*{Notation and terminology} Given an integer $k \ge 1$, we denote by $[k]:=\{1,2,\ldots,k\}$ the set of integers from $1$ up to $k$.
For a graph $G$, we denote by $V(G)$ its vertex-set, by $E(G)$ its edge-set and by $\vv(G)$ and $\e(G)$ the respective cardinalities. For a vertex $v \in V(G)$ we denote by $N_G(v)$ its open neighborhood and by $\text{deg}_G(v)=|N_G(v)|$ its degree, where we occasionally drop the subscript $G$ if it is given by context. For two disjoint sets of vertices $A, B \subseteq V(G)$, we denote $E_G(A,B):=\{ab\in E(G)|(a,b) \in A \times B\}$ for the set of edges connecting $A$ and $B$, and we denote by $\e_G(A,B):=|E_G(A,B)|$ the number of such edges. We say that a graph $G$ is $k$-connected for an integer $k \ge 1$ if $\vv(G)\ge k+1$ and $G-X$ is connected for every $X\subseteq V(G)$ with $|X|<k$. By $\kappa(G)$ we denote the \emph{vertex-connectivity} of $G$, i.e., the minimum $k$ such that $G$ is $k$-connected. Given a random event $A$, we use $\mathbb{P}(A)$ to denote its probability, and we denote by $\mathbb{E}(X)$ the expectation of a real-valued random variable $X$.  Given a random model ranging on $n$-vertex graphs, we say that the outcome of this model satisfies a property \emph{with high probability} (abbreviation: w.h.p.) if the probability of the event that the random graph fulfils the desired property tends to $1$ as $n$ tends to infinity. Similarly, if $\mathcal{G}_n$ for every $n\in \mathbb{N}$ is a class of $n$-vertex graphs, we say that \emph{asymptotically almost every} graph $G$ in $\mathcal{G}_n$ satisfies a specified property if w.h.p. a random graph sampled uniformly at random from $\mathcal{G}_n$ satisfies this property. 
Given integers $m, n \ge 1$ and an edge-probability $p \in [0,1]$, we use $G(m,n;p)$ to denote the bipartite Erd\H{o}s-R\'{e}nyi random graph with bipartition classes $A$ and $B$ of sizes $m$ and $n$, respectively, and in which a pair $ab$ with $a\in A$ and $b \in B$ is chosen as an edge of $G(m,n;p)$ independently with probability $p$. For integers $m, n \ge 1$ we denote by $G(n;m)$ a random graph drawn uniformly from all graphs on vertex-set $[n]=\{1,\ldots,n\}$ with exactly $m$ edges. 

While the original definition of the graph minor-containment relation $\succeq$ is via edge contractions and deletions, for proving the results in this paper it will be more convenient to think about \emph{graph minor models}. Given a graph $G$ and a graph $H$, an \emph{$H$-minor model} is a collection $(Z_h)_{h \in V(H)}$ of pairwise disjoint and non-empty subsets of $V(G)$ with the property that $G[Z_h]$ is a connected graph for every $h \in V(H)$ and such that for every edge $h_1h_2 \in E(H)$, there exists at least one edge in $G$ with endpoints in $Z_{h_1}$ and $Z_{h_2}$. The sets $Z_h, h \in V(H)$ are also called  the \emph{branch sets} of the minor model. 
It is well-known and easy to see that for every pair of graphs $G$ and $H$ we have $G \succeq H$ if and only if there exists an $H$-minor model in $G$. 

\section{Probabilistic lemmas}\label{sec:probab}
In this short preparatory section we prove two simple auxiliary results (Lemmas~\ref{lemma:maxdeg} and~\ref{lemma:randbip}) that will be used in the proofs of both our main results in Section~\ref{sec:thm1proof} and~\ref{sec:thm2proof}. The lemmas capture two simple but important properties exhibited by bipartite Erd\H{o}s-R\'{e}nyi random graphs. These properties will later be used to lower-bound the list chromatic number of the graphs in our constructions for Theorem~\ref{thm:conn} and Theorem~\ref{thm:random} and to argue that they exclude a given graph as a minor.

A basic tool from probability theory that we will use in the following are the classical Chernoff concentration bounds for binomially distributed random variables, stated below.
\begin{lemma}[Chernoff] \label{lemma:chernoff}
    Let $X$ be a binomially distributed random variable. Then the following bounds hold for every $\delta \in (0,1]$:
    $$\mathbb{P}(X\ge (1+\delta)\mathbb{E}(X)) \le \exp\left(-\frac{\delta^2}{3}\mathbb{E}(X)\right), \ \mathbb{P}(X\le (1-\delta)\mathbb{E}(X)) \le \exp\left(-\frac{\delta^2}{2}\mathbb{E}(X)\right).$$
\end{lemma}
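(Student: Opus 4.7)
The plan is to prove this via the standard exponential moment (Bernstein--Chernoff) method. Write $X$ as a sum of independent Bernoulli$(p_i)$ random variables $X=X_1+\cdots+X_n$ with mean $\mu=\mathbb{E}(X)=\sum_i p_i$; the statement is usually proved in this more general independent-Bernoulli form (of which ordinary binomial is a special case), and the argument is identical.

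First, I would apply Markov's inequality to $e^{tX}$ for a parameter $t>0$ to be chosen later: for any threshold $a$,
\[
\mathbb{P}(X\ge a)=\mathbb{P}(e^{tX}\ge e^{ta})\le e^{-ta}\,\mathbb{E}(e^{tX}).
\]
By independence, the moment generating function factorises as $\mathbb{E}(e^{tX})=\prod_i(1-p_i+p_ie^t)$, and the elementary inequality $1+x\le e^x$ applied termwise gives
\[
\mathbb{E}(e^{tX})\le \prod_i \exp\!\bigl(p_i(e^t-1)\bigr)=\exp\!\bigl(\mu(e^t-1)\bigr).
\]

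For the upper tail I would set $a=(1+\delta)\mu$ and optimise by taking $t=\ln(1+\delta)$, which yields the classical bound
\[
\mathbb{P}(X\ge (1+\delta)\mu)\le \left(\frac{e^{\delta}}{(1+\delta)^{1+\delta}}\right)^{\!\mu}.
\]
For the lower tail I would repeat the argument with $t<0$ (using Markov on $e^{tX}\ge e^{ta}$ when $X\le a$), setting $a=(1-\delta)\mu$ and $t=\ln(1-\delta)$, to obtain
\[
\mathbb{P}(X\le (1-\delta)\mu)\le \left(\frac{e^{-\delta}}{(1-\delta)^{1-\delta}}\right)^{\!\mu}.
\]

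The remaining step, and the only slightly technical one, is the two one-variable calculus estimates
\[
\frac{e^{\delta}}{(1+\delta)^{1+\delta}}\le \exp\!\left(-\frac{\delta^{2}}{3}\right),\qquad \frac{e^{-\delta}}{(1-\delta)^{1-\delta}}\le \exp\!\left(-\frac{\delta^{2}}{2}\right),\qquad \delta\in(0,1].
\]
After taking logarithms these become $\delta-(1+\delta)\ln(1+\delta)\le -\delta^2/3$ and $-\delta-(1-\delta)\ln(1-\delta)\le -\delta^2/2$. I would prove each by defining $f(\delta)$ as the difference of the two sides, verifying $f(0)=0$, and showing $f'(\delta)\le 0$ on $(0,1]$ via the Taylor series $\ln(1\pm\delta)=\pm\delta-\delta^2/2\pm\delta^3/3-\cdots$; this is the only place where the specific constants $1/3$ and $1/2$ (as opposed to the symmetric $1/2$ everywhere) enter, the asymmetry arising because $\ln(1+\delta)$ is bounded below less tightly than $\ln(1-\delta)$ is bounded above on this range. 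This calculus step is the main obstacle in the sense that it is the only non-mechanical part of the argument, though it is entirely routine.
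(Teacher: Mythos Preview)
Your argument is the standard textbook proof of the Chernoff bounds and is correct. Note, however, that the paper does not supply its own proof of this lemma: it is quoted as a classical tool (``Chernoff'') and used as a black box in the subsequent lemmas. So there is nothing in the paper to compare your proof against; your write-up simply fills in what the authors take for granted.
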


\begin{lemma} \label{lemma:maxdeg}
Let $p \in (0,1]$ be a constant. 
Then w.h.p. the random bipartite graph $G=G(n, n; p)$ has maximum degree at most $2pn$. 
\end{lemma}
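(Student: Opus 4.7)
The plan is to apply the Chernoff bound (Lemma~\ref{lemma:chernoff}) to the degree of each individual vertex and then take a union bound over all $2n$ vertices of $G$. Fix a vertex $v$ of $G = G(n,n;p)$. By the definition of the bipartite Erd\H{o}s-R\'{e}nyi model, the degree $\deg_G(v)$ is distributed as a sum of $n$ independent Bernoulli$(p)$ random variables (one for each vertex in the opposite bipartition class), hence $\deg_G(v)$ is binomially distributed with mean $\mathbb{E}(\deg_G(v)) = pn$.

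Next I would apply the upper-tail Chernoff bound of Lemma~\ref{lemma:chernoff} with the choice $\delta = 1$. This yields
\[
\mathbb{P}\bigl(\deg_G(v) \ge 2pn\bigr) = \mathbb{P}\bigl(\deg_G(v) \ge (1+1)\,\mathbb{E}(\deg_G(v))\bigr) \le \exp\!\left(-\frac{pn}{3}\right).
\]
Since $p \in (0,1]$ is a fixed constant independent of $n$, this quantity decays exponentially in $n$.

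Finally, a union bound over all $2n$ vertices of $G$ gives
\[
\mathbb{P}\bigl(\Delta(G) \ge 2pn\bigr) \le 2n \cdot \exp\!\left(-\frac{pn}{3}\right),
\]
which tends to $0$ as $n \to \infty$, establishing the claim with high probability. There is no genuine obstacle here: the only thing to be a little careful about is that $p$ is treated as an absolute constant, so that the exponentially small per-vertex failure probability comfortably absorbs the linear factor $2n$ from the union bound.
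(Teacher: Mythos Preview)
Your proof is correct and follows essentially the same approach as the paper: identify each vertex degree as $\mathrm{Bin}(n,p)$, apply the upper-tail Chernoff bound with $\delta=1$ to get $\exp(-pn/3)$, and finish with a union bound over the $2n$ vertices. The paper's argument is identical in every substantive respect.
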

\begin{proof}
Let $A,B$ be the vertex bipartition of $G$. 
Let $\mathcal{E}$ denote the event that some vertex $v \in A \cup B$ has $\deg (v) > 2p n$. 
For each $v \in A \cup B$, the number of neighbors is distributed binomially according to $\deg (v) \sim B(n, p)$, so we can apply the Chernoff bounds (Lemma~\ref{lemma:chernoff}). This yields for every $v \in A \cup B$: 
$$\mathbb{P}(\deg(v)\ge 2pn) = \mathbb{P}(\deg(v)\ge 2\mathbb{E}(\deg(v)))\le \exp\left(-\frac{1}{3}pn\right).$$
Using a union bound over all $v \in A \cup B$, we find that $\mathbb{P}(\mathcal{E})\le 2n \exp(-\frac{1}{3}pn)=o(1)$. Hence, w.h.p. the random bipartite graph $G$ has maximum degree at most $2pn$, as claimed.
\end{proof}

In order to compactly state and refer to our next lemma below, it is convenient for us to introduce the following technical definition.

\begin{definition}[Property $\textsf{P}$] \label{def:propQ}
Let $0<\delta < 1$, $s \in \mathbb{N}$ and let $H$ be a graph on $n$ vertices. We say that a bipartite graph $G$ with  bipartition $\{A, B\}$ satisfies property $\textsf{P}(H, \delta,s)$ if for all integers $k, l \geq \delta n$ the following holds:

If $x_1,\ldots,x_k, y_1, \ldots, y_l \in V(H)$ are distinct vertices satisfying $\e_H(\{x_1,\ldots,x_k\},\{y_1,\ldots,y_l\}) \ge s$ and $X_1, ..., X_{k} \subseteq A$, $Y_1, ..., Y_{l} \subseteq B$ are pairwise disjoint sets of size at most $\frac{1}{\delta}$ each, then there exists an index pair $(i,j) \in [k]\times [l]$ such that $x_iy_j \in E(H)$ and $xy \in E(G)$ for every $(x,y) \in X_i \times Y_j$. 
\end{definition}
\begin{lemma}\label{lemma:randbip}
Let $\delta, p \in (0,1)$ be constants. Then there exists a constant $D=D(\delta,p)>1$ and a sequence $q_n=1-o(1)$ such that with $s=s(n):=\lceil D n \log n \rceil$ for every $n$-vertex graph $H$ the random bipartite graph 
$G = G (n,n;p)$ satisfies $\textsf{P}(H, \delta,s)$ with probability at least $q_n$.
\end{lemma}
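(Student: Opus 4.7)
The plan is to fix an $n$-vertex graph $H$, sample $G=G(n,n;p)$, and apply a union bound over all \emph{configurations} permitted by Definition~\ref{def:propQ}: integers $k,l \in \{\lceil\delta n\rceil,\ldots,n\}$, ordered tuples of distinct vertices $x_1,\ldots,x_k,y_1,\ldots,y_l \in V(H)$ with $\e_H(\{x_1,\ldots,x_k\},\{y_1,\ldots,y_l\}) \ge s$, and pairwise disjoint subsets $X_1,\ldots,X_k \subseteq A$, $Y_1,\ldots,Y_l \subseteq B$ each of size at most $1/\delta$. For such a fixed configuration, the conclusion of $\textsf{P}(H,\delta,s)$ fails for $G$ if and only if for every pair $(i,j)$ with $x_iy_j \in E(H)$ some edge of $X_i \times Y_j$ is missing in $G$.

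For each such pair, let $E_{i,j}$ be the event that every edge of $X_i \times Y_j$ is present in $G$. The key observation is that the rectangles $X_i \times Y_j$ are pairwise disjoint as subsets of $A \times B$: if $(i,j) \neq (i',j')$, then either $X_i \cap X_{i'}=\emptyset$ or $Y_j \cap Y_{j'}=\emptyset$, so $(X_i \times Y_j) \cap (X_{i'}\times Y_{j'})=\emptyset$. Hence the events $E_{i,j}$ depend on pairwise disjoint collections of potential edges of $G$ and are therefore mutually independent. Since $|X_i||Y_j| \le 1/\delta^2$, we have $\mathbb{P}(E_{i,j}) = p^{|X_i||Y_j|} \ge p^{1/\delta^2}$, so setting $\eta := 1 - p^{1/\delta^2}\in(0,1)$, the failure probability for a fixed configuration is bounded by
\[
\prod_{(i,j)\,:\,x_iy_j\in E(H)} \bigl(1-p^{|X_i||Y_j|}\bigr) \le \eta^s.
\]

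It remains to count configurations and to tune $D$. There are at most $n^2$ pairs $(k,l)$, at most $n^n$ choices for each of the ordered tuples $(x_i)$ and $(y_j)$, and at most $(n+1)^{n/\delta}$ choices for each of the tuples $(X_i)_i$ and $(Y_j)_j$ (each $X_i$ is one of at most $(n+1)^{1/\delta}$ subsets of $A$ of size at most $1/\delta$, and $k \le n$; the disjointness constraint can only reduce the count). Thus the total number of configurations is at most $n^{Cn}$ for a constant $C=C(\delta)>0$, and the union bound gives
\[
\mathbb{P}\bigl(G \text{ fails } \textsf{P}(H,\delta,s)\bigr) \le n^{Cn}\cdot \eta^{\lceil D n \log n\rceil}.
\]
Interpreting $\log$ as the natural logarithm (other conventions only rescale $D$), one has $\eta^{Dn\log n}=n^{-Dn\log(1/\eta)}$; choosing $D=D(\delta,p)$ with $D\log(1/\eta) > C+1$ therefore forces the right-hand side to be at most $n^{-n}=o(1)$, which yields the desired $q_n = 1-o(1)$. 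The only genuinely delicate point is the mutual independence of the events $E_{i,j}$, and this is handed to us for free by the disjointness of the rectangles $X_i \times Y_j$; the remainder is routine enumeration, cushioned by the $\log n$ factor hidden in $s$.
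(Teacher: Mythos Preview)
Your proof is correct and follows essentially the same approach as the paper's: a union bound over all admissible configurations, with the per-configuration failure probability bounded by $(1-p^{1/\delta^2})^{s}$ via the mutual independence afforded by the disjointness of the rectangles $X_i\times Y_j$, and $D$ chosen large enough to beat the $n^{O(n)}$ configuration count. The only cosmetic differences are in how configurations are enumerated (the paper first reduces to $k=l=\lceil\delta n\rceil$ and counts the disjoint set families $(X_i)$ as functions $A\to[k+1]$, giving $(k+1)^{|A|}$, rather than bounding each $X_i$ separately as you do).
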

\begin{proof}
Let $D>1$ be chosen as any constant strictly bigger than $4p^{-(1/\delta^2)}$. Let $A, B$ be the vertex bipartition of $G$ with $|A| = |B| = n$, and let $H$ be an $n$-vertex graph. To enable a concise presentation, for every pair of integers $k, l \ge \delta n$ we denote by 
$$\mathcal{V}_{k,l}:=\Set{((x_i)_{i=1}^k,(y_j)_{j=1}^l) | \{x_1,\ldots,x_k,y_1,\ldots,y_l\} \subseteq V(H), \ \e_H(\{x_1,\ldots,x_k\},\{y_1,\ldots,y_l\})\ge s}$$ 
the set of admissible choices of vertices in the definition of property \textsf{P}, and by 
$$\mathcal{S}_{k,l}:=\Set{((X_i)_{i=1}^k,(Y_j)_{j=1}^l) |  X_1,\ldots,X_k\subseteq A,Y_1,\ldots,Y_l\subseteq B\text{ pairwise disjoint}, \ |X_i|, |Y_j|\le 1/\delta}$$ 
the set of admissible choices of the sets in the definition of property \textsf{P}. 
For each choice of $((x_i)_{i=1}^k,(y_j)_{j=1}^l, (X_i)_{i=1}^k,(Y_j)_{j=1}^l) \in \mathcal{V}_{k,l}\times \mathcal{S}_{k,l}$ let us denote by $\mc{E}((x_i)_{i=1}^k,(y_j)_{j=1}^{l},(X_i)_{i=1}^k, (Y_j)_{j=1}^{l})$ the random event that for every pair $(i,j) \in [k]\times [l]$ such that $x_iy_j \in E(H)$, not all of the potential edges between $X_i$ and $Y_j$ are included in $G$.

 Note that $G$ satisfies $\textsf{P}(H,\delta,s)$ if and only if the event $\mc{E}((x_i)_{i=1}^k,(y_j)_{j=1}^{l},(X_i)_{i=1}^k, (Y_j)_{j=1}^{l})$ does \textit{not} occur for any choice of $((x_i)_{i=1}^k,(y_j)_{j=1}^l, (X_i)_{i=1}^k,(Y_j)_{j=1}^l) \in \mathcal{V}_{k,l}\times \mathcal{S}_{k,l}$. Let us now bound the probability of a single event $\mc{E}((x_i)_{i=1}^k,(y_j)_{j=1}^{l},(X_i)_{i=1}^k, (Y_j)_{j=1}^{l})$. For each pair $(i,j)$ such that $x_iy_j \in E(H)$, the probability that all of the $|X_i||Y_j|$ potential edges between $X_i$ and $Y_j$ are present in $G$ is precisely $p^{|X_i||Y_j|}$. Since the edges of $G$ are sampled independently and since the sets of potential edges between $X_i$ and $Y_j$ and between $X_{i'}$ and $Y_{j'}$ for distinct pairs $(i,j), (i',j')$ are disjoint, we conclude that
$$\mathbb{P}(\mc{E}((x_i)_{i=1}^k,(y_j)_{j=1}^{l},(X_i)_{i=1}^k, (Y_j)_{j=1}^{l}))=\prod_{x_iy_j \in E(H)}(1-p^{|X_i||Y_j|}).$$ Using that all the sets $X_1,\ldots,X_k,Y_1,\ldots,Y_l$ are of size at most $\frac{1}{\delta}$ and that there are at least $s\ge Dn\log n$ edges of the form $x_iy_j\in E(H)$, it follows that
$$\mathbb{P}(\mc{E}((x_i)_{i=1}^k,(y_j)_{j=1}^{l},(X_i)_{i=1}^k, (Y_j)_{j=1}^{l}))\le (1-p^{(1/\delta^2)})^{Dn\log n} \le \exp(-p^{(1/\delta^2)}Dn\log n).$$ 

Note that in order to check that the graph $G$ satisfies $\textsf{P}(H,\delta,s)$, it suffices to verify the conditions for the smallest possible values of $k$ and $l$, i.e., when $k=l=\lceil \delta n\rceil$. Hence, using a union bound over all choices of $((x_i)_{i=1}^{\lceil \delta n\rceil},(y_j)_{j=1}^{\lceil \delta n\rceil},(X_i)_{i=1}^{\lceil \delta n\rceil}, (Y_j)_{j=1}^{\lceil \delta n\rceil}) \in \mathcal{V}_{\lceil \delta n\rceil,\lceil \delta n\rceil}\times \mathcal{S}_{\lceil \delta n\rceil,\lceil \delta n\rceil}$, we now obtain:
$$\mathbb{P}(G\text{ does not satisfy property }\textsf{P}(H,\delta,s))\le |\mathcal{V}_{\lceil \delta n\rceil,\lceil \delta n\rceil}| \cdot |\mathcal{S}_{\lceil \delta n\rceil,\lceil \delta n\rceil}|\cdot \exp(-p^{(1/\delta^2)}Dn\log n)$$
$$\le n^{2\lceil \delta n\rceil} \cdot (\lceil \delta n\rceil+1)^{|A|}\cdot (\lceil \delta n\rceil+1)^{|B|}\cdot \exp(-p^{(1/\delta^2)}Dn\log n)$$
$$= \exp(2\lceil\delta n\rceil\log n+2n\log(\lceil\delta n\rceil+1)-p^{(1/\delta^2)}Dn\log n)$$
$$\le \exp((4-p^{(1/\delta^2)}D)\cdot n\log n).$$
In the first estimate above, we upper-bounded the size of $|\mathcal{S}_{\lceil \delta n\rceil,\lceil \delta n\rceil}|$ by noting that the collections $(X_i)_{i=1}^{k}$ of disjoint subsets of $A$ are in one-to-one correspondence with the mappings from $A$ to $[k+1]$, and similarly for the set of collections $(Y_j)_{j=1}^{l}$.

By our choice of $D$, we have $4-p^{(1/\delta^2)}D<0$ and thus the above expression tends to $0$ as $n\rightarrow \infty$. Setting $q_n:=1-\exp((4-p^{(1/\delta^2)}D)n\log n)$ then concludes the proof of the lemma.
\end{proof}

\smallskip
\section{Proof of Theorem~\ref{thm:conn}}\label{sec:thm1proof}
In this section, we present the proof of Theorem~\ref{thm:conn}. We start off by making use of Lemmas~\ref{lemma:maxdeg} and ~\ref{lemma:randbip} from the previous section to establish the existence of small $H$-minor-free graphs that are in a sense ``almost complete'', as follows. 

\begin{lemma}\label{lemma:deterministiccorollary}
For every $\varepsilon \in (0,\frac{1}{2})$ there exists an integer $N=N(\varepsilon)$ such that for every $n \ge N$ and every $n$-vertex graph $H$ with $\kappa(H)\ge \varepsilon n$ there exists a graph $F$ with the following properties:
\begin{itemize}
    \item The vertex-set of $F$ can be partitioned into two disjoint sets $A$ and $B$ such that both $A$ and $B$ form cliques in $F$ and $|A|=\lfloor(1-2\varepsilon)\kappa(H)\rfloor$, $|B|=\lfloor(1-2\varepsilon)n\rfloor$.
    \item Every vertex in $B$ has at most $\varepsilon n$ non-neighbors in $F$.
    \item $F$ is $H$-minor-free.
\end{itemize}
\end{lemma}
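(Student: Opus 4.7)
The plan is to construct $F$ probabilistically. Set $|A|=\lfloor(1-2\varepsilon)\kappa(H)\rfloor$ and $|B|=\lfloor(1-2\varepsilon)n\rfloor$ on disjoint vertex sets, pick a constant $p=p(\varepsilon)\in(0,1)$ small enough that $2p(1-2\varepsilon)\le\varepsilon$, and let $\delta=\delta(\varepsilon)\in(0,1)$ be a further small constant (roughly $\delta\le\varepsilon^{4}$ will suffice). Sample $G=G(|A|,|B|;p)$ and define $F$ on $A\cup B$ by making $A$ and $B$ into cliques and joining $a\in A$ to $b\in B$ exactly when $ab\notin E(G)$. The prescribed sizes and the cliques are built into the construction. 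The same Chernoff argument that proves Lemma~\ref{lemma:maxdeg}, applied to this asymmetric bipartite graph, shows that w.h.p.\ every $B$-vertex has degree at most $2p|A|\le\varepsilon n$ in $G$, i.e.\ at most $\varepsilon n$ non-neighbors in $F$. Invoking Lemma~\ref{lemma:randbip} with these $\delta,p$ and $s=\lceil Dn\log n\rceil$, I may additionally assume $G$ satisfies $\textsf{P}(H,\delta,s)$. Both high-probability events hold together for $n\ge N(\varepsilon)$, so a deterministic $G$ realising all three properties exists.

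The real content is proving that $F$ is $H$-minor-free. Assume for contradiction that $F\succeq H$ with branch sets $(Z_h)_{h\in V(H)}$, and partition $V(H)$ into $T=\{h:Z_h\subseteq A\}$, $S_B=\{h:Z_h\subseteq B\}$, and $S_A=V(H)\setminus(T\cup S_B)$. Because branch sets are pairwise disjoint and each $h\notin T$ uses at least one vertex of $B$, we obtain $n-|T|\le|B|$, hence $|T|\ge 2\varepsilon n$; symmetrically $|S_B|\ge 2\varepsilon n$ and $|T|+|S_A|\le|A|\le(1-2\varepsilon)\kappa(H)$. Using that the minimum degree of $H$ is at least $\kappa(H)\ge\varepsilon n$, summing degrees over $T$ yields
\[
e_H(T,S_B)\;\ge\;|T|\bigl(\kappa(H)-|T|-|S_A|\bigr)\;\ge\;|T|\cdot 2\varepsilon\kappa(H)\;\ge\;4\varepsilon^{3}n^{2}.
\]
I then restrict to $T_s=\{h\in T:|Z_h|\le 1/\delta\}$ and $S_{B,s}=\{h\in S_B:|Z_h|\le 1/\delta\}$. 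From $\sum_h(|Z_h|-1)\le|V(F)|-n\le(1-4\varepsilon)n$ it follows that at most $O(\delta n)$ branch sets exceed size $1/\delta$, so for $\delta$ small enough both $|T_s|,|S_{B,s}|\ge\delta n$ and $e_H(T_s,S_{B,s})\ge 2\varepsilon^{3}n^{2}\ge s$ once $n$ is large. Applying $\textsf{P}(H,\delta,s)$ with the vertices of $T_s,S_{B,s}$ as the $x_i,y_j$ and their own branch sets as the $X_i,Y_j$ produces a pair $(i,j)$ with $x_iy_j\in E(H)$ for which every pair in $Z_{x_i}\times Z_{y_j}$ is an edge of $G$. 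Since $Z_{x_i}\subseteq A$ and $Z_{y_j}\subseteq B$, every potential edge between the two branch sets is an $A$--$B$ edge and is thus absent in $F$, leaving no edge to realize $x_iy_j$---a contradiction with the definition of an $H$-minor model.

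The main obstacle is meeting the threshold $s=\Theta(n\log n)$ demanded by Lemma~\ref{lemma:randbip}. A naive Menger-style argument applied to $H-S_A$ only delivers $\Omega(\kappa(H))=\Omega(n)$ edges in $E_H(T,S_B)$, which falls short by a logarithmic factor. The key step is to combine the degree lower bound $\kappa(H)\ge\varepsilon n$ with the size constraint $|T|+|S_A|\le(1-2\varepsilon)\kappa(H)$ inherited from the deliberately shrunk size of $A$: this yields the quadratic bound $e_H(T,S_B)=\Omega(\varepsilon^{3}n^{2})$, with plenty of room to absorb the edges lost when trimming to the small branch sets $T_s$ and $S_{B,s}$. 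Choosing $p$ and $\delta$ (each a small function of $\varepsilon$) consistently so that every inequality, floor term, and hidden constant from Lemma~\ref{lemma:randbip} cooperates is routine bookkeeping.
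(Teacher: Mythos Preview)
Your argument is correct and follows the paper's approach: both construct $F$ as the complement of a sparse random bipartite graph, control non-neighbors via Chernoff, and exploit $\delta(H)\ge\kappa(H)$ together with the deliberately shrunken size $|A|\le(1-2\varepsilon)\kappa(H)$ to squeeze out $\Omega(n^{2})$ many $H$-edges between the two classes of small branch sets, after which property~$\textsf{P}$ delivers the contradiction. The only cosmetic difference is that the paper samples $G(n,n;p)$ and then passes to subsets $A\subseteq A'$, $B\subseteq B'$ of the target sizes so that Lemma~\ref{lemma:randbip} applies verbatim, whereas you sample $G(|A|,|B|;p)$ directly---harmless, but it tacitly requires the (trivial) extension of that lemma to bipartitions with parts of size at most~$n$.
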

\begin{proof}
Define $p:=\frac{\varepsilon}{2}$ and $\delta:=\varepsilon^2$. By Lemma~\ref{lemma:maxdeg} there is a sequence $p_n=1-o(1)$ such that $G(n,n;p)$ has maximum degree at most $2pn=\varepsilon n$ with probability at least $p_n$, and by Lemma~\ref{lemma:randbip} there exists an absolute constant $D>0$ and a sequence $q_n=1-o(1)$ such that for every $n$-vertex graph $H$ the probability that $G(n,n;p)$ satisfies property $\textsf{P}(H, \delta, \lceil Dn\log n\rceil)$ is at least $q_n$. Let $n_1$ be such that $p_n, q_n>\frac{1}{2}$ for every $n \ge n_1$. Moreover, let $n_2 \in \mathbb{N}$ be chosen large enough such that the inequality $\delta^2 n^2\ge D n \log n$ holds for every $n \ge n_2$. Finally, we put $N:=\max\{n_1,n_2\}$ and claim that the statement of the lemma holds for this choice of $N$. 

To prove this, let any integer $n \ge N$ and a graph $H$ on $n$ vertices with $\kappa(H) \ge \varepsilon n$ be given. Since $n \ge N \ge n_1$, we then have $p_n+q_n>1$, which implies that with positive probability, the random bipartite graph $G(n,n;p)$ \emph{simultaneously} has maximum degree at most $\varepsilon n$ and satisfies property $\textsf{P}(H, \delta, \lceil Dn\log n\rceil)$. Hence there exists at least one bipartite graph $G$ with bipartition $(A',B')$ such that $|A'|=|B'|=n$, $G$ has maximum degree at most $\varepsilon n$, and $G$ satisfies property $\textsf{P}(H,\delta, \lceil Dn\log n\rceil)$. Let $A \subseteq A', B \subseteq B'$ be chosen (arbitrarily) such that $|A|=\lfloor (1-2\varepsilon)\kappa(H)\rfloor$, $|B|=\lfloor(1-2\varepsilon)n\rfloor$. Note that this is possible as $\kappa(H)<\vv(H)=n$. We now define $F$ as the graph complement of the induced subgraph $G[A \cup B]$ of $G$. Since $A$ and $B$ are independent sets in $G$, they form cliques in $F$. Thus the first item of the lemma is satisfied. To verify the second item, it suffices to note that since $G$ has maximum degree at most $\varepsilon n$, the same is true for $G[A \cup B]$, and thus every vertex in $F$ can have at most $\varepsilon n$ non-neighbors in $F$. It thus remains to prove that $F$ is indeed $H$-minor-free. Towards a contradiction, suppose that there exists an $H$-minor model $(Z_h)_{h \in V(H)}$ in $F$. Let $X_A, X_B, X_{AB}$ be the partition of $V(H)$ defined as 
$$X_A:=\{h \in V(H)|Z_h \subseteq A\}, X_B:=\{h \in V(H)|Z_h \subseteq B\}, X_{AB}:=\{h \in V(H)|Z_h \cap A \neq \emptyset \neq Z_h \cap B\}.$$ 
Note that we have $|X_B|+|X_{AB}|\le |B|\le (1-2\varepsilon)n$ as the sets in $(Z_h)_{h \in V(H)}$ are pairwise disjoint. Given that $|X_A|+|X_B|+|X_{AB}|=\vv(H)=n$, this implies that $|X_A| \ge 2\varepsilon n$. Since the sets $(Z_h)_{h \in X_A}$ are disjoint and since $|A|\le (1-2\varepsilon)\kappa(H)<(1-2\varepsilon)n< n$, there cannot be more than $\delta n$ sets of size greater than $\frac{1}{\delta}$ in the collection $(Z_h)_{h \in X_A}$. Hence, there exists $k \ge 2\varepsilon n-\delta n\ge \delta n$ and $k$ distinct vertices $x_1, \ldots,x_k \in X_A$ such that $|Z_{x_i}| \le \frac{1}{\delta}$ for $i=1,\ldots,k$. Note that $H$ has minimum degree at least $\delta(H)\ge \kappa(H)$, for otherwise one could separate a vertex in $H$ from the rest of the graph by deleting fewer than $\kappa(H)$ vertices. Using this, we have $$|N_H(x_i) \cap X_B|\ge \deg_H(x_i)-|X_A \cup X_{AB}| \ge \delta(H)-|A|$$ $$\ge \kappa(H)-(1-2\varepsilon)\kappa(H) =2\varepsilon \kappa(H) \ge 2\varepsilon^2n=2\delta n$$ for every $i=1,\ldots,k$, where in the last step we used that $\kappa(H)\ge \varepsilon n$ by assumption. 
Consider for any fixed index $i \in [k]$ 
the set collection $(Z_h)_{h \in N_H(x_i) \cap X_B}$. Since the sets are pairwise disjoint and contained in the set $B$ of size at most $n$, as above it follows that at most $\delta n$ sets in this collection can be of size greater than $\frac{1}{\delta}$. Consequently, for each $i \in [k]$ there exists a subset $N_i \subseteq N_H(x_i) \cap X_B$ of size at least $2\delta n-\delta n=\delta n$ such that $|Z_h| \le \frac{1}{\delta}$ for every $h \in N_i$ and $i \in [k]$. Let $y_1,\ldots,y_l \in X_B$ be distinct vertices such that $\{y_1,\ldots,y_l\}=\bigcup_{i=1}^{k}{N_i}$. Then clearly, $l \ge |N_1|\ge \delta n$. Furthermore, we have $$\e_H(\{x_1,\ldots,x_k\},\{y_1,\ldots,y_l\})\ge \sum_{i=1}^{k}{|N_i|} \ge k\cdot \delta n\ge \delta^2 n^2\ge D n\log n,$$ where in the last step we used our assumption that $n \ge N \ge n_2$. 

   We can now use that $G$ satisfies property $\textsf{P}(H,\delta,\lceil Dn\log n\rceil)$, which directly implies that there exists a pair $(i,j) \in [k]^2$ such that $x_iy_j 
 \in E(H)$ and $G$ contains all edges of the form $xy$ where $(x,y) \in Z_{x_i}\times Z_{y_j}$. However, by definition of $F$ this means that there exists no edge in $F$ which has endpoints in both $Z_{x_i}$ and $Z_{y_j}$. This is a contradiction to our initial assumption that $(Z_h)_{h \in V(H)}$ form an $H$-minor model in $F$. Thus, $F$ does not contain $H$ as a minor, which establishes the third item of the lemma and concludes the proof.
\end{proof}

Our next lemma below guarantees that for sufficiently well-connected graphs $H$, the property of being $H$-minor-free is preserved when pasting together two graphs along a sufficiently small clique. This statement will then be used in the proof of Theorem~\ref{thm:conn} to glue several copies of the $H$-minor-free graph from Lemma~\ref{lemma:deterministiccorollary} along a common clique, thus eventually creating a graph that is still $H$-minor-free but has an increased list chromatic number. While the statement of the lemma is folklore in the graph minors community, for the sake of completeness we include its short proof. 

\begin{lemma}\label{lemma:glue}
Let $G_1, G_2$ be $H$-minor-free graphs and let $C:=V(G_1) \cap V(G_2)$. If $C$ forms a clique in both $G_1$ and $G_2$ and if $|C|<\kappa(H)$, then the graph union $G_1 \cup G_2$ is also $H$-minor-free.
\end{lemma}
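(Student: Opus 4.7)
The plan is to argue by contradiction. Suppose $G_1 \cup G_2$ contains $H$ as a minor, witnessed by branch sets $(Z_h)_{h \in V(H)}$; from this data I will build an $H$-minor model inside one of $G_1$ or $G_2$, contradicting the assumed $H$-minor-freeness. The central bookkeeping device is the set $C^{\ast} := \{h \in V(H) : Z_h \cap C \neq \emptyset\}$ of index vertices whose branch set meets the shared clique $C$. Since the $Z_h$ are pairwise disjoint, $|C^{\ast}| \le |C| < \kappa(H)$.

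The first step is to establish two structural observations. (i) For every $h \notin C^{\ast}$ the branch set $Z_h$ avoids $C$ and therefore lies in $(V(G_1)\setminus C) \cup (V(G_2)\setminus C)$; since $G_1 \cup G_2$ has no edges between these two parts and $(G_1 \cup G_2)[Z_h]$ is connected, $Z_h$ sits entirely inside one of $V(G_1)\setminus C$ or $V(G_2)\setminus C$. (ii) For every edge $h_1h_2 \in E(H)$ with $h_1, h_2 \notin C^{\ast}$, the edge in $G_1 \cup G_2$ joining $Z_{h_1}$ to $Z_{h_2}$ together with the same non-existence of edges between the two parts forces these two branch sets to lie on a common side. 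Since $|C^{\ast}| < \kappa(H)$, the graph $H - C^{\ast}$ is connected, so (i) and (ii) together show that all branch sets indexed by $V(H) \setminus C^{\ast}$ end up on a single common side. Without loss of generality I take that side to be $V(G_1)\setminus C$.

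Next I would build the claimed $H$-minor model $(Z_h')_{h \in V(H)}$ inside $G_1$ by setting $Z_h' := Z_h$ for $h \notin C^{\ast}$ and $Z_h' := Z_h \cap V(G_1)$ for $h \in C^{\ast}$. Non-emptiness (using $Z_h \cap C \ne \emptyset$ and $C \subseteq V(G_1)$ for $h \in C^{\ast}$) and pairwise disjointness follow immediately from the corresponding properties of $(Z_h)$. The delicate point, and the main obstacle in the argument, is to show that $Z_h'$ remains connected in $G_1$ for $h \in C^{\ast}$: given two vertices $u, w \in Z_h \cap V(G_1)$ and a $u$-$w$-walk in $(G_1 \cup G_2)[Z_h]$, I would replace every maximal sub-walk that passes through $V(G_2) \setminus C$ (which must enter at some $c \in Z_h \cap C$ and leave at some $c' \in Z_h \cap C$) by the single edge $cc' \in E(G_1)$, available because $C$ is a clique in $G_1$. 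This step is the only one making essential use of the clique hypothesis on $C$.

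Finally I would check, for each edge $h_1h_2 \in E(H)$, that some edge of $G_1$ joins $Z_{h_1}'$ and $Z_{h_2}'$. If at least one of $h_1, h_2$ lies outside $C^{\ast}$, the corresponding branch set is contained in $V(G_1) \setminus C$, so the edge supplied by the original minor model has an endpoint outside $V(G_2)$ and hence must already belong to $E(G_1)$ and connect $Z_{h_1}'$ to $Z_{h_2}'$. If both $h_1, h_2 \in C^{\ast}$, I instead pick any $c_i \in Z_{h_i} \cap C$ (distinct since the $Z_h$ are disjoint) and use the clique edge $c_1c_2 \in E(G_1)$. This produces an $H$-minor model inside $G_1$, contradicting the $H$-minor-freeness of $G_1$ and completing the proof.
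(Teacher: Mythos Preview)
Your proposal is correct and follows essentially the same approach as the paper's proof: the paper's $X_C$ is your $C^\ast$, and the paper likewise uses $|X_C|<\kappa(H)$ and the connectivity of $H-X_C$ to force all remaining branch sets onto one side, then rebuilds the minor model in $G_1$ via the same intersection-with-$V(G_1)$ construction and clique short-cutting argument. The only cosmetic difference is that the paper explicitly names the two ``side'' sets $X_1,X_2$ and organizes the final edge-check into three cases rather than two.
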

\begin{proof}
    Towards a contradiction, suppose that $G:=G_1\cup G_2$ contains an $H$-minor model $(Z_h)_{h \in V(H)}$. Note that since every branch set induces a connected subgraph of $G$, for every $h \in V(H)$ with $Z_h \cap V(G_1) \neq \emptyset \neq Z_h \cap V(G_2)$ we also must have $Z_h \cap C \neq \emptyset$, since there are no edges between $V(G_1)\setminus C$ and $V(G_2)\setminus C$ in $G$. Thus, every vertex of $H$ belongs to exactly one of the following: 
    $$X_1:=\{h \in V(H)| Z_h \subseteq V(G_1) \setminus C\}, X_2:=\{h \in V(H)| Z_h \subseteq V(G_2) \setminus C\},$$ $$ X_C:=\{h \in V(H)| Z_h \cap C \neq \emptyset\}.$$ We claim that $X_1=\emptyset$ or $X_2=\emptyset$. Indeed, suppose towards a contradiction that $X_1, X_2\neq \emptyset$. Since the branch sets $(Z_h)_{h \in V(H)}$ are pairwise disjoint, we must have $|X_C|\le |C|<\kappa(H)$. Thus, $H-X_C$ is a connected graph, in particular there exists an edge $h_1h_2\in E(H)$ with $h_1 \in X_1$, $h_2 \in X_2$. However, this contradicts the fact that $(Z_h)_{h \in V(H)}$ is an $H$-minor model and that there can be no edges between $Z_{h_1}\subseteq V(G_1)\setminus C$ and $Z_{h_2}\subseteq V(G_2)\setminus C$ in $H$. This shows that indeed  $X_1$ or $X_2$ is empty, w.l.o.g. we can assume $X_2=\emptyset$.

    For every $h \in X_C$, define $Z_h':=Z_h \cap V(G_1)\neq \emptyset$. We now claim that the collection of disjoint non-empty sets $(Z_h)_{h \in X_1}$ together with $(Z_h')_{h \in X_C}$ forms an $H$-minor model in $G_1$. First of all, for every $h \in X_C$ we have that $G[Z_h']$ is a connected graph. This follows since $G[Z_h]$ is a connected graph and since any pair of vertices $x, y \in Z_h \cap V(G_1)$ that are connected by a path $P$ in $G[Z_h]$ are also connected by a path $P'$ in $G[Z_h']$ that is obtained from $P$ by replacing every of its segments starting and ending in a vertex of $C$ by a direct connection between its endpoints (recall that $C$ is a clique).
    
    It remains to verify that for every edge $h_1h_2\in E(H)$ there is a connection in $G_1$ between the branch sets of $h_1$ and $h_2$. If both $h_1, h_2 \in X_1$, this is obvious. If both $h_1, h_2 \in X_C$, then the fact that $Z_{h_1}\cap C \neq\emptyset \neq Z_{h_2}\cap C$ implies a connection between $Z_{h_1}'$ and $Z_{h_2}'$ in $G_1$ (witnessed by an edge in the clique $C$). Finally, if, say, $h_1 \in X_1$ and $h_2 \in X_C$, then consider an edge $xy \in E(G)$ with $x \in Z_{h_1}$, $y \in Z_{h_2}$. Since $Z_{h_1}\subseteq V(G_1) \setminus C$, the vertex $x$ has no connections to $V(G_2)\setminus C$ and thus we must have $y \in V(G_1)$. Thus the same edge $xy$ witnesses a connection between $Z_{h_1}$ and $Z_{h_2}'$ in $G_1$. All in all this shows that indeed, $G_1$ contains an $H$-minor model, which is a contradiction to the assumptions of the lemma. Thus, our initial assumption was wrong, $G$ is  $H$-minor-free.
\end{proof}

The last ingredient required to complete the proof of Theorem~\ref{thm:conn} is a simple but important idea on how to lower-bound the list chromatic of a graph that is obtained from a fixed graph $F$ by repeated pasting along the same clique. Since the statement will also be reused for the proof of Theorem~\ref{thm:random} in the next section, we decided to isolate it here. We use the following terminology:
\begin{definition}[Pasting]
    Let $F$ be a graph, let $S \subseteq V(F)$ and $K \in \mathbb{N}$. A \emph{$K$-fold pasting of $F$ at $S$} is any graph that can be expressed as the union of $K$ isomorphic copies $F_1, \ldots,F_K$ of $F$ with the property that $V(F_i) \cap V(F_j)=S$ for all $1 \le i < j \le K$.
\end{definition}
\begin{lemma} \label{lemma:cliqdegetc}
Let $m,n,d \in \mathbb{N}$ with $d \le m$ and let $F$ be a graph whose vertex set is partitioned into two cliques $A$, $B$ such that every vertex in $B$ has at least $|A|-d$ neighbors in $A$. 
Let $K = (|A|+|B|-1)^{|A|}$ and let $F^{(K)}$ be a $K$-fold pasting of $F$ at $A$. Then $\chi_\ell(F^{(K)}) \geq |A|+|B|-d$. 
\end{lemma}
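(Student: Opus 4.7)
The plan is to exhibit a list assignment $L$ on $V(F^{(K)})$ with $|L(v)|=|A|+|B|-d-1$ for every $v$ that admits no proper $L$-coloring; this will immediately yield $\chi_\ell(F^{(K)})\ge |A|+|B|-d$. Let me abbreviate $a:=|A|$, $b:=|B|$, $s:=a+b-d-1$. The case $b\le d$ is trivial: then $s<a$ and the clique $A$ alone fails to be $L$-colorable whatever the lists are, so I will henceforth assume $b\ge d+1$.

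The central idea is to use the $K$ copies $B_1,\ldots,B_K$ of $B$ as ``blockers'', one per proper coloring of $A$. To this end, I will assign every vertex of $A$ the same list $[s]$. Since $A$ is a clique, proper $L$-colorings of $A$ are precisely injections $A\hookrightarrow[s]$, so there are at most $s^{a}\le(a+b-1)^{a}=K$ of them. I will enumerate these proper colorings as $c_1,\ldots,c_M$ with $M\le K$ and will assign the extra copies $B_i$ ($i>M$) arbitrary lists of size $s$, which we will never need to control.

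For each $i\in[M]$, I will then pick a fresh palette $T^{(i)}$ of size $b-1$, pairwise disjoint across $i$ and disjoint from $[s]$. For each $v\in B_i$, writing $N_v:=N_F(v)\cap A$, I will define $L(v):=c_i(N_v)\cup T^{(i)}_v$, where $T^{(i)}_v\subseteq T^{(i)}$ is any subset of size $s-|N_v|$. The hypothesis $|N_v|\ge a-d$ is precisely what guarantees $0\le s-|N_v|\le b-1=|T^{(i)}|$, so such a subset exists, and the disjointness of $c_i(N_v)\subseteq[s]$ from $T^{(i)}$ forces $|L(v)|=s$.

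To finish, I will argue by contradiction: suppose $\phi$ is a proper $L$-coloring of $F^{(K)}$. Then $\phi|_A$ is a proper $L$-coloring of the clique $A$, hence coincides with some $c_i$ with $i\le M$. For every $v\in B_i$, its neighbors in $A$ have already consumed the colors $c_i(N_v)$, so $\phi(v)\in L(v)\setminus c_i(N_v)=T^{(i)}_v\subseteq T^{(i)}$. Thus $\phi(B_i)\subseteq T^{(i)}$, which is impossible because $B_i$ is a clique of size $b$ while $|T^{(i)}|=b-1$. The only calibrations that matter are the counting $s^{a}\le(a+b-1)^{a}=K$, which fixes the value of $K$ needed for one blocker per coloring of $A$, and the equality $s-(a-d)=b-1$, which is exactly what allows the surplus colors $T^{(i)}_v$ to sit inside a palette of size $|B|-1$. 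I do not foresee any genuine obstacle; the argument is essentially a clean pigeonhole-plus-Hall-failure once these two numerical alignments are in place.
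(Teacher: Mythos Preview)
Your proof is correct, apart from a small slip in the trivial case: when $b\le d$ you write that ``the clique $A$ alone fails to be $L$-colorable whatever the lists are,'' which is false as stated (a clique can be list-colored from small lists if the lists are pairwise disjoint); what you mean, and what suffices, is that $A$ is not $L$-colorable for the \emph{specific} assignment $L(a)=[s]$ you go on to use, since $s<|A|$. In the main case $b\ge d+1$ the argument is clean and complete.

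Your route and the paper's share the same skeleton---give $A$ a common list, then dedicate one copy $B_i$ to each coloring of $A$ to force a failure---but the list constructions are genuinely different. The paper works over the single palette $[|A|+|B|-1]$: every vertex of $A$ receives this full palette, and for $v\in B_i$ one sets $L(v)=[|A|+|B|-1]\setminus\{c_i(a):a\in A\setminus N(v)\}$, i.e.\ one \emph{removes} the colors that $c_i$ places on the non-neighbors of $v$. The contradiction is then a single pigeonhole on all of $F_i$: there are $|A|+|B|$ vertices and only $|A|+|B|-1$ colors, so two non-adjacent vertices $u\in A$, $v\in B_i$ collide, yet $c(u)=c_i(u)$ was explicitly excluded from $L(v)$. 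You instead shrink the list on $A$ to $[s]$, introduce a fresh palette $T^{(i)}$ of size $|B|-1$ for each copy, and set $L(v)=c_i(N_v)\cup T^{(i)}_v$; your pigeonhole lives entirely inside $B_i$ against $T^{(i)}$. The paper's version is more economical---one palette, and the bijection with $[|A|+|B|-1]^A$ uses all $K$ copies exactly---while yours makes the blocking mechanism in each $B_i$ more explicit and only needs to enumerate the \emph{proper} colorings of $A$, leaving the surplus copies idle.
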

\begin{proof}
Let $F_1,...,F_K$ be an ordering of the copies of $F$ in the pasting graph $F^{(K)}$, and let $B_1,...,B_K$ be the corresponding copies of $B$. 
Let $f : [|A|+|B|-1]^A \rightarrow [K]$ be an arbitrary bijection and let $c_1,...,c_K : A \rightarrow [|A|+|B|-1]$ be the ordering of color assignments to $A$ that satisfies $f(c_i) = i$ for all $i \in [K]$.
Consider the list assignment $L : V(F^{(K)}) \rightarrow 2^{[|A|+|B|-1]}$ defined as follows:
\begin{itemize}
    \item $L(a) := [|A|+|B|-1]$ for all $a \in A$
    \item $L(b) := [|A|+|B|-1] \setminus \{c_i(a) \mid a \in A \setminus N_{F_i}(b) \}$ for all $b \in B_i$ for all $i \in [K]$
\end{itemize}

Given that $B$ is a clique and every vertex in $B$ by assumption has at most $d$ non-neighbors in $F$, we have $|L(v)| \geq |A|+|B|-1-d$ for all $v \in V(F^{(K)})$. 
Now assume towards a contradiction that $F^{(K)}$ admits a proper $L$-coloring $c$ and let $i \in [K]$ be the unique index satisfying $c(a) = c_i(a)$ for all $a \in A$. 
Then let $c_{F_i} : A \cup B_i \rightarrow [|A|+|B|-1]$ be the coloring $c$ restricted to the graph $F_i$.
Since $\vv(F_i) = |A|+|B|$, there exist by the pigeonhole principle vertices $u,v \in V(F_i)$ with $c(u) = c(v)$. 
Since $c$ is proper and $A$, $B$ are cliques, we have $uv \notin E(F_i)$ and $u \in A$, $v \in B$ without loss of generality. 
However, $c(u) \notin L(v)$ by the construction of $L$, a contradiction.
\end{proof}

By assembling the previously established pieces, we can now easily deduce Theorem~\ref{thm:conn}.

\begin{proof}[Proof of Theorem~\ref{thm:conn}]
Let a constant $\varepsilon>0$ be given, and let $\tilde{\varepsilon}\in (0,1)$ be chosen small enough such that $3\tilde{\varepsilon}<\varepsilon$. Let $N=N(\tilde{\varepsilon})$ be as in Lemma~\ref{lemma:deterministiccorollary}. We now set $n_0:=\max\{N,\lceil\frac{1}{\varepsilon^2}\rceil,\lceil\frac{2}{\varepsilon-3\tilde{\varepsilon}}\rceil\}$ and claim that Theorem~\ref{thm:conn} holds for this choice of $n_0$. 

Let $H$ be a graph on $n \ge n_0$ vertices. We have to prove that $f_\ell(H)\ge (1-\varepsilon)(n+\kappa(H))$. If $\kappa(H)<\varepsilon n$, then this follows directly from the trivial lower bound via $$f_\ell(H) \ge \vv(H)-1=n-1\ge (1-\varepsilon^2)n=(1-\varepsilon)(n+\varepsilon n)>(1-\varepsilon)(n+\kappa(H)).$$
Thus, we may now assume $\kappa(H)\ge \varepsilon n$, in particular, $\kappa(H) \ge \tilde{\varepsilon} n$. Using $n \ge N$ and Lemma~\ref{lemma:deterministiccorollary} we now find that there exists an $H$-minor-free graph $F$ whose vertex-set is partitioned into two cliques $A, B$ such that $|A|=\lfloor (1-2\tilde{\varepsilon})\kappa(H)\rfloor <\kappa(H)$ and $|B|=\lfloor (1-2\tilde{\varepsilon})n\rfloor$, and such that every vertex in $B$ has at most $\tilde{\varepsilon}n$ non-neighbors in $F$. Let $d:=\lfloor \tilde{\varepsilon}n \rfloor$ and $K:=(|A|+|B|-1)^{|A|}$. Let $F^{(K)}$ denote a $K$-fold pasting of $F$ at the clique $A$. Since every vertex in $B$ has at least $|A|-d$ neighbors in $A$, we can apply Lemma~\ref{lemma:cliqdegetc} to find that 
$$\chi_\ell(F^{(K)}) \ge |A|+|B|-d \ge (1-2\tilde{\varepsilon})(\kappa(H)+n)-2-\tilde{\varepsilon}n$$
$$\ge (1-3\tilde{\varepsilon})(n+\kappa(H))-2\ge (1-\varepsilon)(n+\kappa(H)),$$
using $n \ge n_0 \ge \frac{2}{\varepsilon-3\tilde{\varepsilon}}$ in the last step. In addition, since $|A|<\kappa(H)$, the graph $F^{(K)}$ is $H$-minor-free by repeated application of Lemma~\ref{lemma:glue}. All in all, we conclude that $f_\ell(H)\ge (1-\varepsilon)(\vv(H)+\kappa(H))$, as desired.
\end{proof}

\smallskip
\section{Proof of Theorem~\ref{thm:random}}\label{sec:thm2proof}
In this section, we present the proof of Theorem~\ref{thm:random}. The theorem claims a lower bound on $f_\ell(H)$ for almost all graphs $H$ on $n$ vertices and $\lceil C n \log n\rceil$ edges for some large constant $C>0$. However, in fact the only condition on the graph $H$ our lower bound proof relies upon is the following pseudo-random graph property, guaranteeing the existence of many edges between every pair of disjoint linear-size vertex subsets in $H$.
\begin{definition}[Property $\textsf{Q}$, graph family $\mc Q_n$] \label{def:propP}
Let $\delta > 0$ and $D > 1$ be arbitrary. We say that a graph $H$ with $n$ vertices satisfies property $\textsf{Q}(\delta, D)$ if for every two disjoint vertex sets $A, B \subseteq V(H)$ with $|A|,|B| \geq \delta n$, we have $\e_H(A,B)\ge D n \log n$. Let $\mc{Q}_{n} (\delta, D)$ denote the family of $n$-vertex graphs $H$ that satisfy property $\textsf{Q}(\delta, D)$.
\end{definition}
Crucially, as we prove next, property $\textsf{Q}(\delta,D)$ is satisfied for almost all graphs on $n$ vertices with an average degree of $C\log n$ for a large enough constant $C$.
\begin{lemma} \label{lemma:propP}
Let $\delta > 0$, $D > 1$ be arbitrary and let $m:\mathbb{N}\rightarrow \mathbb{N}$ be defined as $m(n) = \lceil\frac{D^2}{ \delta ^2} n\log n\rceil$.
Then with high probability as $n \rightarrow \infty$, a random graph $H = G(n;m(n))$ drawn uniformly from all $n$-vertex graphs with $m(n)$ edges satisfies property $\textsf{Q}(\delta, D)$.
\end{lemma}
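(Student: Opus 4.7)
The plan is to use a standard first-moment plus concentration argument, followed by a union bound over all candidate pairs $(A, B)$. Fix any ordered pair of disjoint subsets $A, B \subseteq [n]$ with $|A|, |B| \geq \delta n$, and consider the random variable $X_{A,B} := \e_H(A, B)$. In the uniform model $G(n; m)$ with $m = m(n) = \lceil(D^2/\delta^2) n \log n \rceil$, this random variable is hypergeometrically distributed: we sample $m$ edges without replacement from the $\binom{n}{2}$ potential edges on $[n]$, of which at least $\delta^2 n^2$ connect $A$ with $B$. Its expectation is therefore
$$\mu \;:=\; \mathbb{E}[X_{A,B}] \;=\; m \cdot \frac{|A||B|}{\binom{n}{2}} \;\geq\; \frac{D^2}{\delta^2} n \log n \cdot \frac{\delta^2 n^2}{n^2/2} \;=\; 2 D^2 n \log n.$$

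I would then invoke the fact (originally due to Hoeffding) that hypergeometric random variables satisfy the same Chernoff-type lower-tail bound as the binomial, i.e., $\mathbb{P}[X_{A,B} \leq (1-t)\mu] \leq \exp(-t^2 \mu/2)$ for $t \in (0,1]$. Since $D > 1$, the threshold $D n \log n$ is at most $\mu/(2D) \leq \mu/2$, so the choice $t := 1 - D n \log n/\mu \geq 1 - 1/(2D) \geq 1/2$ gives
$$\mathbb{P}[X_{A,B} < D n \log n] \;\leq\; \exp(-\mu/8) \;\leq\; \exp(-D^2 n \log n / 4).$$

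The final step is a union bound over all ordered pairs $(A, B)$ of disjoint subsets of $[n]$; there are at most $3^n$ such pairs, because each vertex of $[n]$ either lies in $A$, in $B$, or in neither. Hence
$$\mathbb{P}[H \notin \mc{Q}_n(\delta, D)] \;\leq\; 3^n \cdot \exp(-D^2 n \log n / 4),$$
which tends to $0$ as $n \to \infty$ since $\log n \to \infty$ while $\log 3$ is a constant, completing the proof.

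The argument is essentially routine and I do not anticipate any genuinely hard step; the only mild subtlety is that Lemma~\ref{lemma:chernoff} as stated in the paper applies to the binomial distribution, whereas $X_{A,B}$ is hypergeometric. If one prefers to avoid invoking Hoeffding's hypergeometric Chernoff bound directly, an equivalent route is to first prove the analogous statement for the Erd\H{o}s--R\'{e}nyi model $G(n, p)$ with $p := m/\binom{n}{2}$ using Lemma~\ref{lemma:chernoff}, and then transfer the conclusion to $G(n; m)$ via the standard monotone coupling between the two random graph models, exploiting the fact that property $\textsf{Q}(\delta, D)$ is monotone increasing in the edge set.
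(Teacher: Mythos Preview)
Your argument is correct and follows the same overall template as the paper: fix a pair $(A,B)$, lower-bound the expectation of $\e_H(A,B)$, apply a Chernoff-type tail bound, and finish with a $3^n$ union bound. The one place where the two proofs diverge is exactly the ``mild subtlety'' you flagged. You invoke Hoeffding's hypergeometric Chernoff bound directly (or suggest transferring from $G(n,p)$ via monotonicity). The paper instead stays within the binomial Chernoff of Lemma~\ref{lemma:chernoff} by a coupling trick: it views $G(n;m)$ as the sequential process that adds $m$ random non-edges one at a time, observes that for large $n$ each step places its edge between $A$ and $B$ with probability at least $(N-m)/\binom{n}{2}>\delta^2$ (using $m<\tfrac{\delta^2}{2}n^2$), and thereby stochastically dominates $\e_H(A,B)$ from below by a $B(m,\delta^2)$ variable. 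Your route is slightly cleaner and yields a marginally better exponent; the paper's route is entirely self-contained given only the binomial Chernoff bound it has already stated.
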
 
\begin{proof}  
 One possible way to sample $H$ uniformly from all $n$-vertex graphs on $m=m(n)$ edges is as follows: Starting from the edgeless graph $H_0$ on vertex-set $[n]$, for $i=1,\ldots,m$ uniformly at random pick an edge $e_i$ not included in $H_{i-1}$ and add it to $H_{i-1}$, thus creating a new graph $H_i$. Then $(H_i)_{i=0}^{m}$ forms a sequence of random graphs on vertex-set $[n]$ where $H_i$ is drawn uniformly from the graphs with exactly $i$ edges. In particular, we may w.l.o.g. identify $H=H_m$. 

 Let us now go about proving the claim of the lemma. Let $A, B \subseteq [n]$ be any given pair of disjoint sets such that $|A|,|B|\ge \delta n$ and let us bound the probability that $\e_H(A,B)<Dn\log n$.
 We denote the number of potential edges $ab$ with $a\in A, b \in B$ by $N :=|A||B|\ge  \delta^2 n^2$. At the $i$-th step of the process above, the new edge $e_i$ which gets added to $H_{i-1}$ has endpoints in $A$ and $B$ with probability at least $\frac{N-(i-1)}{\binom{n}{2}-(i-1)}$.

For large enough $n$, we have $m=m(n)=\lceil \frac{D^2}{\delta^2}{n\log n}\rceil<\frac{\delta^2}{2}n^2$ and thus for $i=1,\ldots,m$ 
$$\mathbb{P}(\text{edge $e_i$ has endpoints in $A$ and $B$}) \ge \frac{N-(i-1)}{\binom{n}{2}-(i-1)} \ge \frac{N-\frac{\delta^2}{2}n^2}{\binom{n}{2}}>\delta^2 .$$

From this observation it follows that there exists a binomially distributed random variable $X\sim B(m(n),\delta^2)$ such that $\e_H(A,B)\ge X$. Let $\mu := \mathbb{E}[X] = m(n)\delta^2$. Using $\mu \ge D^2n\log n $ and Lemma~\ref{lemma:chernoff}, we have for large enough $n$ 
$$\mathbb{P}(\e_H(A,B) < D n \log n) 
\le \mathbb{P}\Bigl(X \leq \frac{\mu}{D} \Bigr)
\le \exp \Bigl(- \bigl(1-\frac{1}{D} \bigr)^2 \frac{\mu}{2} \Bigr)
\le \exp \Bigl(-  \frac{(D-1)^2}{2} n \log n \Bigr).
$$
The number of ways to choose the disjoint sets $A$ and $B$ can be bounded from above by $3^n$ (each vertex can be in $A$, $B$, or neither). 
By the union bound, we have
$$ \mathbb{P}(H \text{ violates property } \textsf{Q}(\delta,D)) \le 3^n \cdot \exp \Bigl(-  \frac{(D-1)^2}{2} n \log n \Bigr) \leq \exp(O(n) - \Omega(n \log n )).$$
This probability tends to $0$ as $n \rightarrow \infty$, as desired.
\end{proof}
In our next step towards proving Theorem~\ref{thm:random}, we establish the following statement somewhat analogous to Lemma~\ref{lemma:deterministiccorollary}, showing how to build small and close-to-complete $H$-minor-free graphs for a given graph $H \in \mathcal{Q}_n(\delta,D)$. 
\begin{lemma} \label{lemma:propQ-no-subgraph-minor}
    Let $\delta\in (0,1)$, $D > 1$, $n \in \mathbb N$, and $H \in \mc{Q}_{n}(\delta, D)$ be arbitrary. Moreover, let $G$ be a bipartite graph with bipartition $\{A, B\}$, $|A| = |B| = \lfloor(1-3\delta)n\rfloor$ satisfying property $\textsf{P}(H, \delta,s)$ for $s=\lceil Dn\log n\rceil$. Then its complement graph $G^\complement$ does not contain $H[U]$ as a minor for any $U \subseteq V(H)$ with $|U| \geq (1-\delta)n$.
\end{lemma}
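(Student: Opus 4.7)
The plan is to argue by contradiction, essentially transcribing the minor-exclusion argument from the second half of the proof of Lemma~\ref{lemma:deterministiccorollary}, but with the pseudo-random property $\textsf{Q}(\delta,D)$ taking over the role previously played by the high vertex-connectivity of $H$.

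First, I would suppose that $G^\complement$ contains an $H[U]$-minor model $(Z_h)_{h \in U}$ and partition $U$ into $U_A := \{h \in U \mid Z_h \subseteq A\}$, $U_B := \{h \in U \mid Z_h \subseteq B\}$, and $U_{AB} := U \setminus (U_A \cup U_B)$, according to whether $Z_h$ lies inside $A$, inside $B$, or meets both sides. Using that $(Z_h)_{h \in U}$ is a family of pairwise disjoint nonempty subsets of $A \cup B$ together with $|A|,|B| \le (1-3\delta)n$ and the hypothesis $|U| \ge (1-\delta)n$, a direct count gives $|U_A|+|U_{AB}| \le (1-3\delta)n$ and $|U_B|+|U_{AB}| \le (1-3\delta)n$, from which $|U_A|, |U_B| \ge 2\delta n$.

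Next, since the sets $(Z_h)_{h \in U_A}$ are pairwise disjoint subsets of $A$ with $|A| \le n$, at most $\delta n$ of them can have size exceeding $1/\delta$, and the same applies to $(Z_h)_{h \in U_B}$. Hence I can extract subsets $X \subseteq U_A$ and $Y \subseteq U_B$, each of size at least $\delta n$, such that $|Z_h| \le 1/\delta$ for every $h \in X \cup Y$. Since $X$ and $Y$ are disjoint subsets of $V(H)$ of size at least $\delta n$, property $\textsf{Q}(\delta, D)$ yields $\e_H(X, Y) \ge D n \log n$, and integrality of $\e_H(X,Y)$ upgrades this to $\e_H(X, Y) \ge s$.

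Finally, I would feed enumerations of $X$ and $Y$ together with the corresponding small disjoint branch sets $(Z_x)_{x \in X}$ in $A$ and $(Z_y)_{y \in Y}$ in $B$ into property $\textsf{P}(H, \delta, s)$. This delivers a pair $x \in X$, $y \in Y$ with $xy \in E(H)$ and with every element of $Z_x \times Z_y$ being an edge of $G$, and hence a non-edge of $G^\complement$. However, $x, y \in U$ forces $xy \in E(H[U])$, so the minor model requires at least one edge of $G^\complement$ between $Z_x$ and $Z_y$, a contradiction. The only mildly delicate point is the counting step that bounds the number of ``large'' branch sets on each side; aside from that, the proof is a clean adaptation of the recipe already developed for Lemma~\ref{lemma:deterministiccorollary}.
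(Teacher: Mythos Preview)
Your proposal is correct and follows essentially the same route as the paper's proof: contradict an $H[U]$-minor model by partitioning $U$ according to which side of the bipartition the branch sets lie in, count to get at least $2\delta n$ branch sets fully inside each of $A$ and $B$, discard the at most $\delta n$ oversized ones on each side, invoke $\textsf{Q}(\delta,D)$ to guarantee $\ge s$ crossing edges in $H$ between the surviving vertex sets, and then apply $\textsf{P}(H,\delta,s)$ to obtain the contradictory pair. The only cosmetic differences are notation ($U_A,U_B,U_{AB}$ versus $X_A,X_B,X_{AB}$) and that the paper bounds the number of large branch sets by $\delta(1-3\delta)n<\delta n$ while you use the coarser bound $\delta n$ directly; neither affects the argument.
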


\begin{proof}
    Assume $G^\complement$ contains $H[U]$ as a minor for some $U \subseteq V(H)$ with $|U| \geq (1-\delta)n$. 
    Let $(Z_h)_{h \in U}$ be an $H[U]$-minor model in $G^\complement$ and define
    $X_A := \{h \in U \mid Z_h \subseteq A\}$, 
    $X_B := \{h \in U \mid Z_h \subseteq B\}$, and 
    $X_{AB} := \{h \in U \mid Z_h \cap A \neq \emptyset \neq \ Z_h \cap B\}$.
    We have 
    $|X_A| + |X_{AB}| \leq |A|$, 
    $|X_B| + |X_{AB}| \leq |B|$, \\ and
    $|X_A| + |X_B| + |X_{AB}|=|U| \geq (1-\delta)n$, which implies $|X_A|, |X_B| \ge (1-\delta)n-(1-3\delta)n=2\delta n$. 
    \\ Since the branch-sets $(Z_h)_{h \in X_A}$ in $A$ and the branch-sets $(Z_h)_{h \in X_B}$ in $B$ are pairwise disjoint, at most $\delta (1-3\delta) n<\delta n$ branch sets in each of $(Z_h)_{h \in X_A}$ and $(Z_h)_{h \in X_B}$ can be larger than $\frac{1}{\delta}$. Thus, there are at least $2\delta n-\delta n=\delta n$ branch sets of size at most $\frac{1}{\delta}$ in $(Z_h)_{h \in X_A}$ as well as in $(Z_h)_{h \in X_B}$.
    Thus for $k:=l:= \lceil \delta n \rceil$, there exist distinct vertices $x_1, ..., x_{k} \in X_A$, $y_1,\ldots,y_l \in X_B$ such that $|Z_{x_i}|, |Z_{y_j}| \le \frac{1}{\delta}$ for all $1 \le i, j \le k=l$. Since $H\in \mathcal{Q}_n(\delta,D)$, we have $\e_H(\{x_1,\ldots,x_k\},\{y_1,\ldots,y_l\})\ge \lceil D n \log n\rceil=s$.   
    Next we use our assumption that $G$ satisfies property $\textsf{Q}(H, \delta,s)$. It implies that there exists an edge $x_i y_j \in E(H)$ with $(i,j) \in [k]\times [l]$ such that $G$ contains all the edges $xy$ with $(x,y) \in Z_{x_i} \times Z_{y_j}$. Then, however, there is an edge between vertices $x_i$ and $y_j$ in $H$, but no edge between the corresponding branch sets $Z_{x_i}$ and $Z_{y_j}$ in $G^\complement$, a contradiction. 
\end{proof}
The next auxiliary statement we need is Lemma~\ref{lemma:subgraph-minor} below, which establishes a weak analogue of Lemma~\ref{lemma:glue} for graphs $H\in \mathcal{Q}_n(\delta,D)$. Note that as these graphs may have sublinear minimum degree and connectivity, Lemma~\ref{lemma:glue} cannot be used to obtain the same statement.
\begin{lemma} \label{lemma:subgraph-minor}
    Let $\delta > 0$, $D > 1$, $H \in \mc{Q}_n(\delta, D)$ and let $F$ be a graph with a clique $W \subseteq V(F)$ of size $\lfloor(1-3\delta)n\rfloor$. Let $K
    \in \mathbb{N}$ and let $F^{(K)}$ be a $K$-fold pasting of $F$ at $W$.  If $F^{(K)}$ contains $H$ as a minor, then there exists $U \subseteq V(H)$ with $|U|\ge (1-\delta)n$ such that $F$ contains $H[U]$ as a minor. 
\end{lemma}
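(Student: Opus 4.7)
The plan is to start from any $H$-minor model $(Z_h)_{h \in V(H)}$ in $F^{(K)}$ and extract from it an $H[U]$-minor model sitting inside a single copy $F_{i_0}$. Let $F_1, \ldots, F_K$ be the copies of $F$ making up the pasting, and partition $V(H)$ by setting $X_W := \{h \in V(H) : Z_h \cap W \neq \emptyset\}$ and, for each $i \in [K]$, $X_i := \{h \in V(H) : Z_h \subseteq V(F_i) \setminus W\}$. Since $F^{(K)}$ has no edges between $V(F_i) \setminus W$ and $V(F_j) \setminus W$ for $i \neq j$, each connected branch set $Z_h$ that avoids $W$ must be contained in a unique $V(F_i) \setminus W$, so this is indeed a partition of $V(H)$. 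Moreover $|X_W| \le |W| = \lfloor (1-3\delta)n \rfloor$, hence $\sum_i |X_i| \ge n - |X_W| \ge 3\delta n$. The pivotal observation is that \emph{no edge of $H$ joins $X_i$ to $X_j$ whenever $i \neq j$}: such an edge would be witnessed by an edge of $F^{(K)}$ between $Z_{h_1} \subseteq V(F_i) \setminus W$ and $Z_{h_2} \subseteq V(F_j) \setminus W$, but no such edge exists.

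Combined with property $\textsf{Q}(\delta,D)$, this observation pins down the coarse structure of the $X_i$'s. If every $|X_i|$ were smaller than $\delta n$, then, since $\sum_i |X_i| \ge 3\delta n$, a greedy grouping of the $X_i$'s into two parts would yield disjoint unions $S_1, S_2 \subseteq V(H)$ each of size at least $\delta n$; by the pivotal observation $\e_H(S_1, S_2) = 0$, contradicting property $\textsf{Q}$. Hence some $i_0 \in [K]$ satisfies $|X_{i_0}| \ge \delta n$. Applying property $\textsf{Q}$ once more to the disjoint pair $X_{i_0}$ and $X' := V(H) \setminus (X_W \cup X_{i_0}) = \bigcup_{j \neq i_0} X_j$, between which $H$ again has no edges, forces $|X'| < \delta n$ and therefore $|U| > (1-\delta)n$ for $U := X_W \cup X_{i_0}$.

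The final step is to exhibit an $H[U]$-minor model in $F_{i_0}$. I set $Z_h' := Z_h$ for $h \in X_{i_0}$ and $Z_h' := Z_h \cap V(F_{i_0})$ for $h \in X_W$; both families consist of non-empty pairwise disjoint subsets of $V(F_{i_0})$. For $h \in X_W$, connectivity of $F_{i_0}[Z_h']$ follows by taking any path in $F^{(K)}[Z_h]$ between two vertices of $Z_h'$ and replacing each maximal excursion outside $V(F_{i_0})$ by a direct edge between its entry and exit vertices in $W \cap Z_h$, using that $W$ is a clique in $F_{i_0}$. The required edges between branch sets of adjacent vertices of $H[U]$ split into three cases: edges inside $X_{i_0}$ are automatically edges of $F_{i_0}$; edges with one endpoint in $X_{i_0}$ and one in $X_W$ are also edges of $F_{i_0}$, because the $X_{i_0}$-endpoint lies in $V(F_{i_0}) \setminus W$ and hence has all its neighbors in $F_{i_0}$; and edges with both endpoints in $X_W$ are witnessed by an edge of the clique $W$ connecting $Z_{h_1}' \cap W$ to $Z_{h_2}' \cap W$. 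The main hurdle will be the dichotomy of the previous paragraph: the bound $|W| = \lfloor(1-3\delta)n\rfloor$ is precisely what guarantees $\sum_i |X_i| \ge 3\delta n$, which is in turn just enough for property $\textsf{Q}$ to rule out the "all $|X_i|$ small" scenario and produce a unique dominant copy; everything else is a structural verification leveraging $W$ being a clique.
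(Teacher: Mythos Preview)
Your proof is correct and follows essentially the same approach as the paper: the same partition into $X_W$ and the $X_i$'s, the same use of property~$\textsf{Q}$ together with the absence of $H$-edges between different $X_i$'s to force a single dominant copy, and the same clique-based shortcutting to build the $H[U]$-model inside that copy. The only cosmetic difference is that you first establish the existence of a large $X_{i_0}$ and then bound the complement, whereas the paper assumes $\xi_1+\xi_W<(1-\delta)n$ and derives a contradiction via a two-case split; the underlying computations are identical.
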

\begin{proof}
In the following, let $F_1, \ldots, F_K$ denote the copies of $F$ such that $F^{(K)}=\bigcup_{i=1}^{K}{F_i}$.

Suppose $F^{(K)}$ has an $H$-minor and fix an $H$-minor model $(Z_h)_{h\in V(H)}$ in $H$. Let us denote $X_W := \{h \in V(H) \mid Z_h \cap W \neq \emptyset\}$ and $\xi_W:=|X_W|$, and $X_i := \{h \in V(H) \mid Z_h \subseteq V(F_i) \setminus W\}$ and $\xi_i:=|X_i|$ for every $i \in [K]$. Note that since every branch-set $Z_h$ induces a connected subgraph of $F$, every vertex $h \in V(H)$ appears in exactly one of the sets $X_W, X_1,\ldots,X_K$, i.e., they form a partition of $V(H)$. In particular, we have $\xi_W+\sum_{i=1}^{K}{\xi_i}=\vv(H)=n$. 

We have $\xi_W \leq |W|\le  n-3\delta n$ and thus $\sum_{i=1}^{K} \xi_i=n-\xi_W \geq 3 \delta n$. In the following, let us w.l.o.g. assume $[K]$ is ordered such that $\xi_1 \geq \xi_2 \geq \cdots \geq \xi_K$. We claim that $\xi_1\ge (1-\delta)n-\xi_W$. 
Towards a contradiction, suppose in the following that $\xi_1 < (1-\delta)n - \xi_W$. We first note that using this assumption, we have that $\sum_{i=2}^K \xi_i=n-(\xi_W+\xi_1) >n-(1-\delta)n=\delta n$.

Now suppose for a first case that $\xi_1 \ge \delta n$. Then the two disjoint sets of vertices $X_1$ and $\bigcup_{i=2}^{K}{X_i}$ in $H$ are both of size at least $\delta n$. By property $\textsf{Q}(\delta,D)$ this implies that $\e_H(X_1,\bigcup_{i=2}^{K}{X_i})\ge Dn\log n>0$. In particular there exists $2 \le i \le K$ and an edge $uv\in E(H)$ for some $u \in X_1$ and $v \in X_i$. This implies that there must exist an edge in $F^{(K)}$ connecting a vertex in $Z_u\subseteq V(F_1)\setminus W$ to a vertex in $Z_v\subseteq V(F_i)\setminus W$. However, by construction of $F^{(K)}$ no such edges exist, and so we arrive at the desired contradiction in this first case. 

For the second case, suppose that $\xi_1<\delta n$ (and thus in particular $\xi_i<\delta n$ for all $i \in [K]$). Let $j \in [K]$ be the smallest index such that $\sum_{i=1}^{j}{\xi_i}>\delta n$ (this is well-defined, since $\sum_{i=1}^K{\xi_i}\ge 3\delta n$, see above). By the minimality of $j$, we have $\sum_{i=1}^{j}{\xi_i}=\xi_j+\sum_{i=1}^{j-1}{\xi_j}\le \delta n+\delta n=2\delta n$. This implies that $\sum_{i=j+1}^K{\xi_i}=\sum_{i=1}^K{\xi_i}-\sum_{i=1}^j{\xi_i}\ge 3\delta n - 2\delta n =\delta n$. In consequence, we find that the two disjoint vertex sets $\bigcup_{i=1}^{j}{X_i}, \bigcup_{i=j+1}^{K}{X_i}$ in $H$ are both of size at least $\delta n$. Hence, using property $\textsf{Q}(\delta,D)$ we have $\e_H(\bigcup_{i=1}^{j}{X_i},\bigcup_{i=j+1}^{K}{X_i})\ge Dn\log n>0$. Similar as above, this implies the existence of two indices $i,i'$ with $1 \le i\le j<i'\le K$ such that there exists an edge between $V(F_i)\setminus W$ and $V(F_{i'})\setminus W$ in $F^{(K)}$. As this is impossible by construction of $F^{(K)}$, a contradiction follows also in the second case. Thus our initial assumption $\xi_1<(1-\delta)n-\xi_W$ was false. 

We therefore have $|X_1 \cup X_W|=\xi_1+\xi_W \geq (1-\delta)n$. Let $U:=X_1 \cup X_W$. For every $h \in U$, let $Z_h':=Z_h$ if $h \in X_1$ and $Z_h':=Z_h \cap V(F_1)$ if $h \in X_W$. 
We now show that $(Z_h')_{h \in U}$ is an $H[U]$-minor model in $F_1$, which will then conclude the proof of the lemma.

First of all, note that $F_1[Z_h']$ is a connected graph for every $h \in U$. If $h \in X_1$, then $F_1[Z_h']=F^{(K)}[Z_h]$ is connected since $(Z_h)_{h \in V(H)}$ is an $H$-minor model. And if $h \in X_W$, then the connectivity of $F_1[Z_h']=F^{(k)}[Z_h \cap V(F_1)]$ follows since (1) $F^{(K)}[Z_h]$ is connected and (2) every path connecting two vertices in $Z_h'$ that is contained in $F^{(K)}[Z_h]$ can be shortened to a path whose vertex-set is completely contained in $V(F_1)$ by short-cutting every segment of the path that starts and ends in the clique $W$ by the direct connection between its endpoints.

Let us now consider any edge $uv \in E(H[U])$. Then there must exist an edge $xy \in E(F^{(K)})$ with $x \in Z_u, y \in Z_v$. If we have $x, y\in V(F_1)$, then this witnesses the existence of an edge between $Z_u'$ and $Z_v'$ in $F_1$, as desired. If on the other hand at least one of $x,y$ lies outside of $V(F_1)$, then we necessarily must have $Z_u \cap W \neq \emptyset \neq Z_v \cap W$, and thus there exists an edge in the clique induced by $W$ (and thus also in $F_1$) that connects a vertex in $Z_u'$ to a vertex in $Z_v'$. All in all, this shows that $F_1$ contains $H[U]$ as a minor. Since $|U| \ge (1-\delta)n$, this concludes the proof.
\end{proof}

With the previous auxiliary results at hand, we can now deduce Theorem~\ref{thm:random}.

\begin{proof}[Proof of Theorem~\ref{thm:random}]
Let a constant $\varepsilon \in (0,1)$ be given. Let $\delta>0$ be chosen small enough such that $7\delta <\varepsilon$, set $p:=\frac{\delta}{2}$, let $D=D(\delta,p)>1$ be the constant given by Lemma~\ref{lemma:randbip}, and let $C := \frac{D^2}{\delta^2}$. 
\\For every $n \in \mathbb{N}$, put $s=s(n)=\lceil Dn\log n\rceil$. 
By Lemma~\ref{lemma:propP}, a random graph $H = G(n; \lceil Cn \log n\rceil)$ chosen uniformly from all $n$-vertex graphs with $\lceil Cn\log n\rceil$ edges satisfies property $\textsf{Q}(\delta,D)$ w.h.p. as $n \rightarrow \infty$. 
Now assume the graph $H$ satisfies property $\textsf{Q}(\delta,D)$. 
By Lemmas~\ref{lemma:maxdeg} and \ref{lemma:randbip}, w.h.p. as $n \rightarrow \infty$, the random bipartite graph $G = G(\lfloor(1-3\delta)n\rfloor, \lfloor(1-3\delta)n\rfloor ; p )$ has maximum degree at most $2p\lfloor(1-3\delta)n\rfloor\le \delta n$ and satisfies property $\textsf{P}(H, \delta,s)$.
Now fix $n$ large enough and consider a graph $G$ with bipartition $\{A,B\}$, $|A|=|B|=\lfloor(1-3\delta n)\rfloor$ satisfying these two properties. 
By Lemma \ref{lemma:propQ-no-subgraph-minor}, $G^\complement$ does not contain any induced subgraph $H[U]$ as a minor for any $U \subseteq V(H)$ with $|U| \geq (1-\delta) n$. Let $K := (|A|+|B|-1)^{|A|}$ and let$(G^\complement)^{(K)}$ be a $K$-fold pasting of $G^\complement$ at $A$.
Then by Lemma~\ref{lemma:subgraph-minor}, $(G^\complement)^{(K)}$ does not contain $H$ as a minor. 
Moreover, by Lemma~\ref{lemma:cliqdegetc}, applied with $d=\lfloor \delta n\rfloor$, we find that $(G^\complement)^{(K)}$ has list chromatic number at least $|A|+|B|-d> 2(1-3\delta)n - \delta n -2 > (2-\varepsilon)n$ for $n$ large enough. This shows that w.h.p. the random graph $H=G(n;\lceil Cn \log n\rceil)$ satisfies $f_\ell(H)\ge (2-\varepsilon)n$, which concludes the proof.
\end{proof}

\smallskip
\section{Proof of Theorem~\ref{prop:isol}}\label{sec:append}
In this section we give the proof of Theorem~\ref{prop:isol}, which is self-contained and independent of the results in the previous sections. A basic tool from extremal graph theory used in the proof is Tur\'{a}ns theorem, in the following form:
\begin{theorem}[Tur\'{a}n]\label{thm:turan}
    Let $k \in \mathbb{N}$, $k \ge 2$ and let $G$ be a graph. If $\e(G)>(1-\frac{1}{k-1})\frac{\vv(G)^2}{2}$ then $G$ contains a clique on $k$ vertices.
\end{theorem}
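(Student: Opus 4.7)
The plan is to establish the contrapositive: every $K_k$-free graph $G$ on $n=\vv(G)$ vertices satisfies $\e(G)\le(1-\frac{1}{k-1})\frac{n^2}{2}$. I would proceed by induction on $k$. The base case $k=2$ is immediate because a $K_2$-free graph has no edges and the bound evaluates to $0$.

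For the inductive step, let $G$ be any $K_k$-free graph on $n$ vertices. Following Erd\H{o}s's symmetrization argument, I would replace $G$ by a denser but still $K_k$-free graph $G'$ of more useful shape. Pick a vertex $v$ of maximum degree $\Delta$ in $G$, and set $A:=N_G(v)$, $B:=V(G)\setminus A$ (so $v\in B$). Define $G'$ on the same vertex set by leaving $G[A]$ unchanged, making $B$ into an independent set, and placing every possible edge between $A$ and $B$; equivalently, each $u\in B$ is replaced by a ``copy'' of $v$.

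The first thing to verify is that $G'$ has at least as many edges as $G$ and is still $K_k$-free. For the edge count: only edges incident to $B$ change, and in $G$ those number $\e_G(A,B)+2\e(G[B])=\sum_{u\in B}\deg_G(u)\le|B|\Delta=|A|\cdot|B|$, while in $G'$ there are exactly $|A|\cdot|B|$ such edges. For the clique property: any clique in $G'$ can include at most one vertex of the independent set $B$, and if it uses some $u\in B$ then replacing $u$ by $v$ yields a clique of the same size in $G$, which has at most $k-1$ vertices. The second observation is that $G'[A]=G[A]$ is $K_{k-1}$-free: otherwise, together with $v\in B$ (which is adjacent in $G'$ to all of $A$), it would extend to a $K_k$ in $G'$. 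Applying the inductive hypothesis with value $k-1$ to $G[A]$ gives $\e(G[A])\le(1-\frac{1}{k-2})\frac{a^2}{2}$, where $a:=|A|$, and therefore
\[
\e(G)\le\e(G')=\e(G[A])+a(n-a)\le\left(1-\frac{1}{k-2}\right)\frac{a^2}{2}+a(n-a).
\]
The final step is a one-variable optimization: the right-hand side is a concave quadratic in $a$ whose maximum on $[0,n]$ is attained at $a=\frac{(k-2)n}{k-1}$, and a direct calculation shows this maximum equals precisely $(1-\frac{1}{k-1})\frac{n^2}{2}$, which completes the inductive step.

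The main obstacle is the symmetrization step itself, and in particular the inequality $\e(G')\ge\e(G)$; this ultimately reduces to $\sum_{u\in B}\deg_G(u)\le|B|\Delta$, which follows at once from the maximality of $\deg_G(v)$. The remaining ingredients --- checking $K_k$-freeness of $G'$, reducing to $G[A]$, and the final optimization --- are routine.
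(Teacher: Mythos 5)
The paper does not prove Theorem~\ref{thm:turan}; it is invoked as a classical result and used as a black box in the proof of Theorem~\ref{prop:isol}. So there is no paper proof to compare against. Your proof is the standard Erd\H{o}s degree--majorization (symmetrization) argument, and it is essentially correct: the base case, the construction of $G'$, the preservation of $K_k$-freeness, the reduction of $G[A]$ to the $(k-1)$-case, and the closing one-variable optimization are all sound, and the quadratic maximum indeed works out to $\bigl(1-\tfrac{1}{k-1}\bigr)\tfrac{n^2}{2}$.

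One small wording issue worth fixing: you write that the edges of $G$ incident to $B$ ``number $\e_G(A,B)+2\e(G[B])=\sum_{u\in B}\deg_G(u)$.'' The number of edges with at least one endpoint in $B$ is $\e_G(A,B)+\e(G[B])$, not $\e_G(A,B)+2\e(G[B])$; the latter is the degree sum over $B$, which overcounts the $B$--$B$ edges. Your inequality still goes through because the degree sum is an \emph{upper} bound for the true edge count, so $\e_G(A,B)+\e(G[B])\le\sum_{u\in B}\deg_G(u)\le|B|\Delta=|A||B|=\e_{G'}$ on the $B$-incident edges; but the sentence as written asserts a false identity. Cleaning that one line up would make the argument fully rigorous.
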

We also use the following classical result regarding the minimum degree of $K_t$-minor-free graphs, as independently proved by Kostochka~\cite{kostochka} and Thomason~\cite{thomason}. 
\begin{theorem}[\cite{kostochka, thomason}]\label{thm:degeneracy}
For every integer $t \ge 1$ there exists an integer $d=d(t)=O(t \sqrt{\log t})$ such that every graph of minimum degree at least $d$ contains $K_t$ as a minor. In particular, for every graph $F$ there exists $d=d(F) \in \mathbb{N}$ such that all graphs of minimum degree at least $d$ contain $F$ as a minor.
\end{theorem}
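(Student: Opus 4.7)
The plan is to focus on the stronger second assertion of Theorem~\ref{prop:isol}, since $(\vv(H)-2)$-degeneracy immediately implies $\chi_\ell(G) \le \vv(H) - 1$ for every $H$-minor-free graph $G$ via the standard greedy list-coloring along a degeneracy ordering. Combining this with the trivial lower bound $f_\ell(H) \ge f_\chi(H) \ge \chi(K_{\vv(H)-1}) = \vv(H) - 1$ (which holds since $K_{\vv(H)-1}$ is too small to host an $H$-minor) yields $f_\ell(H) = \vv(H) - 1$. The degenerate cases $\vv(F) \le 2$ are handled directly, since then every $H$-minor-free graph either has fewer than $\vv(H)$ vertices or is edgeless, and hence is trivially $(\vv(H)-2)$-degenerate.

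For the degeneracy statement, I argue by contrapositive: suppose some subgraph $G$ of an $H$-minor-free graph has minimum degree at least $\vv(H)-1 = \vv(F)+k-1$; since $G$ itself is still $H$-minor-free, the goal is to exhibit an $H$-minor in $G$ to derive a contradiction. Writing $t := \vv(F) \ge 3$, the argument splits into two cases based on $\vv(G)$. In the \emph{dense} case $\vv(G) < (t+k-1)(t-1)/(t-2)$, the edge count $\e(G) \ge \vv(G)(t+k-1)/2$ exceeds the Tur\'an threshold $(1-1/(t-1))\vv(G)^2/2$ for a $K_t$-subgraph, so Theorem~\ref{thm:turan} yields $F \subseteq K_t \subseteq G$ as a subgraph. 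Since $\vv(G) - t \ge k$ by the min-degree bound, the $F$-subgraph extends to an $H$-minor by assigning $k$ further unused vertices as singleton branch sets for the $k$ isolated vertices of $H$.

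In the \emph{sparse} case $\vv(G) \ge (t+k-1)(t-1)/(t-2)$, the plan is to exhibit an $F$-minor in $G$ using at most $\vv(G) - k$ vertices, after which the remaining $\ge k$ vertices again serve as singleton branch sets for the $k$ isolated vertices of $H$. To secure such a compact $F$-minor, the key ingredient is the following \emph{bounded-size minor lemma}: for every graph $F$ there exist constants $d'(F), c(F)$ such that every graph of minimum degree at least $d'(F)$ contains $F$ as a minor on at most $c(F)$ vertices. Granting this, choosing $k_0(F)$ large enough so that both $t+k_0-1 \ge d'(F)$ and $(t+k_0-1)(t-1)/(t-2) \ge c(F)+k_0$ (equivalently $(t-1)^2 + k_0 \ge c(F)(t-2)$) hold simultaneously, one finds for every $k \ge k_0$ an $F$-minor on $\le c(F) \le \vv(G) - k$ vertices in $G$, and the $H$-minor construction goes through.

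The main obstacle is the bounded-size minor lemma itself, since Theorem~\ref{thm:degeneracy} (Kostochka--Thomason) merely guarantees existence of an $F$-minor and does not a priori bound its size. To prove the lemma, I would first apply Mader's theorem (as used in the proof of Corollary~\ref{cor:edges}) to extract from the high minimum degree a subgraph whose vertex-connectivity is linear in $d'(F)$, and then invoke a linkage theorem of Bollob\'as--Thomason or Thomas--Wollan type ensuring that any sufficiently highly connected graph is $F$-linked. Routing $\e(F)$ short internally disjoint paths between $\vv(F)$ prescribed branch vertices inside a bounded-size, densely populated portion of this connected subgraph (for instance a low-radius BFS ball, whose size is controlled by the high minimum degree) would then yield the required $F$-topological (and hence ordinary) minor on a constant number of vertices depending only on $F$, completing the argument.
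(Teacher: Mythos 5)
The proposal tackles Theorem~\ref{prop:isol} (the given excerpt, Theorem~\ref{thm:degeneracy}, is a cited result of Kostochka and Thomason that the paper does not prove), so I will review it as such. The reduction from $f_\ell(H)=\vv(H)-1$ to $(\vv(H)-2)$-degeneracy, the contrapositive set-up, and your ``dense case'' (Tur\'an on the whole graph when $\vv(G)<(t+k-1)(t-1)/(t-2)$) are all correct and are in the same spirit as the paper's final step. The problem is the ``sparse case,'' which rests entirely on your \emph{bounded-size minor lemma}, and that lemma is false.

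Concretely, take $F=K_3$. For any candidate constants $d'(K_3)$ and $c(K_3)$, there exist $d'(K_3)$-regular graphs $G$ of girth $g>c(K_3)$ (random regular graphs or explicit Ramanujan-type constructions; girth can be $\Theta(\log_d \vv(G))$ and hence unbounded as $\vv(G)\to\infty$ with the degree held fixed). Any $K_3$-minor model in $G$ induces a subgraph containing a cycle, and every cycle in $G$ has length at least $g$, so every $K_3$-minor uses at least $g>c(K_3)$ vertices. Thus no constant $c(F)$ can work whenever $F$ contains a cycle, which is exactly the nontrivial regime (for forests $F$ the whole theorem is easy by degeneracy). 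Your proposed route to prove the lemma collapses for the same reason: Mader gives a highly connected subgraph, and linkage theorems make it $F$-linked, but the high-girth graph is itself highly connected, and a ``low-radius BFS ball'' in it is simply a tree, so there is nothing to route the required internally disjoint paths through within any bounded region.

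The paper's proof avoids bounding the size of the $F$-minor entirely. It picks a \emph{minimum-size} vertex set $X$ with an $F$-minor in $G[X]$ and instead bounds, for each $v\notin X$ and each branch set $Z_f$, the number of neighbors of $v$ inside $Z_f$ by $O(\vv(F))$: the key observation is that each $G[Z_f]$, being part of a minimal model, is a tree with at most $\vv(F)-1$ leaves, hence decomposes into $O(\vv(F))$ bare paths, and if $v$ had four or more neighbors on a single such path one could shortcut through $v$ and strictly shrink $X$. This yields $|N(v)\cap X|=O(\vv(F)^2)$ for all $v\notin X$ without any control on $|X|$ itself; from there $G-X$ has minimum degree $>k-O(\vv(F)^2)$, must have fewer than $k$ vertices (else an $H$-minor is immediate), and Tur\'an delivers a $K_{\vv(F)}$-subgraph which together with $k$ spare vertices gives the contradiction. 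If you wanted to salvage your structure, you would need to replace the bounded-size lemma by something like ``$G$ has an $F$-minor on at most $\vv(G)-k$ vertices,'' but the proof of that is essentially the paper's argument and is not a consequence of existing linkage theory.
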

\begin{proof}[Proof of Theorem~\ref{prop:isol}]
We start by fixing an integer $d \in \mathbb{N}$ as guaranteed by Theorem~\ref{thm:degeneracy}, i.e. such that every graph of minimum degree at least $d$ contains $F$ as a minor. We now define $k_0(F):=\min\{d+1,9\cdot\vv(F)^3\}$. Let $k \ge k_0(F)$ be any given integer. Let $H$ denote the graph obtained from $F$ by adding $k$ isolated vertices. We will now show that every $H$-minor-free graph is $(\vv(H)-2)$-degenerate, which then easily implies $f_\ell(H)=\vv(H)-1$. 

Towards a contradiction, suppose that there exists an $H$-minor-free graph $G$ which is not $(\vv(H)-2)$-degenerate, and let $G$ be chosen such that $\vv(G)$ is minimized. Note that the minimality assumption on $G$ immediately implies that $\delta(G)\ge \vv(H)-1=\vv(F)+k-1$. Observe that since $\delta(G)\ge k>d$, the graph $G-x$ for some $x \in V(G)$ has minimum degree at least $d$ and thus must contain $F$ as a minor. Let $X \subseteq V(G)$ be chosen of minimum size subject to $G[X]$ containing $F$ as a minor. Note that from the above it follows that $|X|\le\vv(G)-1$ and hence that $V(G)\setminus X \neq \emptyset$. Let $(Z_f)_{f \in V(F)}$ be an $F$-minor model in $G[X]$. By minimality of $X$, we have that $(Z_f)_{f \in V(F)}$ forms a partition of $X$. With the goal of bounding the number of edges in $G-X$, we present our next argument as a separate claim.

\begin{claim} \label{claim}
For every $v \in V(G)\setminus X$ and every $f \in V(F)$, we have $|N(v) \cap Z_f| < 9\vv(F)$.
\end{claim}
\begin{proof}
For $|Z_f|=1$ the inequality $|N(v) \cap Z_f|\le 1<9\vv(F)$ trivially holds for every $v \in V(G)\setminus X$. We may therefore assume $|Z_f|\ge 2$. Let $T_f$ denote a spanning tree of the connected graph $G[Z_f]$, and let $L_f\subseteq Z_f$ be the set of leaves in $T_f$. Then for every $l \in L_f$ the graph $G[Z_f\setminus \{l\}]$ is still connected. However, by minimality of $X$, $G[X]-l$ does not contain $F$ as a minor, and thus in particular the set system consisting of $Z_f\setminus \{l\}$ together with the remaining branch-sets $(Z_{f'})_{f' \in V(F), f' \neq f}$ cannot be an $F$-minor model in $G$. In consequence, there has to exist some $f' \in V(F)\setminus\{f\}$ such that among all vertices in $Z_f$, the vertex $l$ is the only one that has a neighbor in $Z_{f'}$. Since the above argument applies to any choice of $l \in L_f$, and since the respective elements $f'$ have to be distinct for different choices of $l$, it follows that $|L_f|\le |V(F)\setminus \{f\}|= \vv(F)-1$. Hence $T_f$ is a tree with at most $\vv(F)-1$ leaves. Let $T_f'$ be a tree without degree $2$-vertices such that $T_f$ is a subdivision of $T_f'$, i.e., every edge in $T_f'$ corresponds to one maximal path of $T_f$ all whose internal vertices are of degree $2$. Then, since $T_f'$ is a tree and thus has average degree less than $2$, it contains less vertices of degree at least $3$ than it has leaves. In particular, $\vv(T_f')\le |L_f|+(|L_f|-1)\le 2\vv(F)-3$ and therefore $\e(T_f')=\vv(T_f')-1\le 2\vv(F)-4<2\vv(F)$. This means that $T_f$ can be expressed as the edge-disjoint union of a collection of paths $(P_i)_{i=1}^r$ where $r<2\vv(F)$ and the internal vertices of each path $P_i$ are of degree $2$ in $T_f$. 

Next let us pick some set $Y\subseteq Z_f$ of size at most $\vv(F)-1$ as follows: For each edge $ff' \in E(F)$, pick some vertex $y_{f'} \in Z_f$ that has at least one neighbor in $Z_{f'}$ and add it to $Y$. Let $\mathcal{R}$ denote the collection of internally disjoint paths in $T_f$ obtained from $(P_i)_{i=1}^r$ by splitting each path $P_i$ into its maximal subpaths that do not contain internal vertices in $Y$. It is easy to see that $|\mathcal{R}| \le r+|Y|<2\vv(F)+\vv(F)=3\vv(F)$, and that $T_f$ equals the union of the paths in $\mathcal{R}$. We next claim that for every vertex $v \in V(G)\setminus X$ and every $R \in \mathcal{R}$, we have $|N(v) \cap V(R)|\le 3$. Indeed, suppose that $v$ hast at least $4$ distinct neighbors on $R$. Let $x$ and $y$ be the two neighbors of $v$ on $R$ that are closest to the endpoints of $R$. Define $R'$ as the path obtained from $R$ by replacing its subpath between $x$ and $y$ (which has to contain at least two internal vertices) by the path $x-v-y$ of length two. Let $A$ be the set of vertices on $R$ strictly between $x$ and $y$. Setting $X':=(X\setminus A) \cup \{v\}$ we can see that $|X'|<|X|$. However, we can find an $F$-minor in $G[X']$, witnessed by the branch-sets $(Z_f\setminus A) \cup \{x\}$ together with $(Z_{f'})_{f' \in V(F), f'\neq f}$. Notice that $G[(Z_f\setminus A) \cup \{x\}]$ is indeed connected, since all the internal vertices of $R$ were of degree $2$ in $T_f$. Also, since $Y\subseteq Z_f\setminus A$, it is still true that there exists a connection from a vertex in $(Z_f\setminus A) \cup \{x\}$ (namely, $y_{f'}$) to a vertex in $Z_{f'}$ for every edge $ff'\in E(F)$. Finally, this contradicts our initial choice of $X$ and proves that our assumption was wrong, so indeed every vertex $v \in V(G)\setminus X$ satisfies $|N(v) \cap V(R)|\le 3$ for every $R\in \mathcal{R}$. Therefore, we have $|N(v) \cap Z_f| \le \sum_{R \in \mathcal{R}}{|N(v) \cap V(R)|}\le 3|\mathcal{R}|<9\vv(F)$ for every $v \in V(G)\setminus X$, which concludes the proof of the claim.
\end{proof}

It follows immediately from Claim \ref{claim} that $|N(v) \cap X| \le \sum_{f \in V(F)}{|N(v) \cap Z_f|} <9\vv(F)^2$ for every $v \in V(G)\setminus X$. Additionally recalling that $\delta(G) \ge \vv(F)+k-1\ge k$, we find that for every $v \in V(G)\setminus X$, we have $\text{deg}_{G-X}(v)=|N(v) \setminus X|=\text{deg}(v)-|N(v) \cap X|>k-9\vv(F)^2$. Having established $V(G)\setminus X \neq \emptyset$ at the beginning of the proof, it now follows that $G-X$ is a graph of minimum degree greater than $k-9\vv(F)^2$. Also, note that since $G[X]$ contains $F$ as a minor, we are not able to find $k$ distinct vertices in $V(G)\setminus X$ as these could be used to augment the $F$-minor in $G[X]$ to an $H$-minor in $G$, contradicting our assumptions. We thus have $\vv(G-X)<k$. Using our choice of $k_0$ and $k \ge k_0$, it now follows that 
$$\delta(G-X) >k-9\vv(F)^2> \left(1-\frac{1}{\vv(F)-1}\right)k > \left(1-\frac{1}{\vv(F)-1}\right)\vv(G-X).$$ 
Therefore, $G-X$ has more than $\bigl(1-\frac{1}{\vv(F)-1}\bigr)\frac{\vv(G-X)^2}{2}$ edges and thus Theorem~\ref{thm:turan} implies the existence of a clique on $\vv(F)$ vertices in $G-X$. In particular, $G-X$ and thus $G$ contain a subgraph isomorphic to $F$. Let $K \subseteq V(G)$ be the vertex-set of such a copy of $F$. Then, since $\vv(G)\ge \delta(G)+1\ge \vv(F)+k$, there are at least $k$ vertices outside of $K$ in $G$, which can be added to the copy of $F$ on vertex-set $K$ to create a subgraph of $G$ that is isomorphic to $H$. In particular, this means that $G$ contains $H$ as a minor, a contradiction. All in all, we find that our initial assumption, namely regarding the existence of a smallest counterexample $G$ to our claim, was wrong. This concludes the proof that all $H$-minor-free graphs are $(\vv(H)-2)$-degenerate. 

It is a well-known fact and easy to prove by induction that for every $a \in \mathbb{N}$ all $a$-degenerate graphs are $(a+1)$-choosable. Thus what we have proved also implies that every $H$-minor-free graph is $(\vv(H)-1)$-choosable, as desired. All in all, it follows that $f_\ell(H)=\vv(H)-1$, concluding the proof of the theorem.
\end{proof}

\end{document}